\theoremstyle{plain}
\newtheorem{theorem}{Theorem}
\newtheorem{proposition}{Proposition}
\newtheorem{lemma}{Lemma}
\newtheorem{corollary}{Corollary}
\theoremstyle{definition}
\newtheorem{definition}{Definition}
\theoremstyle{remark}
\newtheorem{remark}{Remark}
\newcommand{\Asp}{{\boldsymbol A}}     %A-space
\newcommand\Bisp{{\Bsp^1}}     %B1-space
\newcommand{\Bsp}{{\boldsymbol B}}     %B-space (boldsymbol)
\newcommand\BispN{{(\Bisp, \, \|\ebbes\|^{(1)})}}     %B1-space normed : better BiN
\newcommand{\ebbes}{\mbox{$\,\cdot\,$}}     %something (etwas)
\newcommand{\BspN}{(\Bsp, \, \|\ebbes\|_\Bsp)}     %B-space with norm
\newcommand\Btsp{{\Bsp^2}}     %B2-space
\newcommand\BtspN{(\Bsp^2, \, \|\ebbes\|^{(2)})} %NEW     %B2-space Normed
\newcommand\COiw{{\Csp^0_{1/w}}}     %CO inv.weight
\newcommand{\Csp}{{\boldsymbol C}}     %C-space (bold C)
\newcommand{\CcG}{{\Ccsp(G)}}     %Cc(G) compactly supported
\newcommand{\Ccsp}{{\Csp_{\negthinspace c}}}     %C_c-space
\newcommand{\CcRd}{{\Ccsp(\Rst^d)}}     %C_c(Rd) compactly supported cont. fcts
\newcommand{\Rst}{{\mathbb R}}     %R set (bold)
\newcommand\CuOiwRd{{\Csp^{\, 0}_{\nnth 1/w}(\Rdst)}}     %C upper O 1/w Rd
\newcommand{\nnth}{{ \negthinspace \: \negthinspace }}     %negthin : negthin (used for opnorm)
\newcommand{\Rdst}{{{\Rst^d}}}     %R^d
\newcommand\CuOiwsp{{\Csp^{\, 0}_{\nnth 1/w}}}     %C upper O 1/w
\newcommand{\DPsi}{{\operatorname{D}_\Psi}}     %D_Psi (discretization operator)
\newcommand\DPsimu{{D_\Psi \mu }}     %DPsi-mu
\newcommand{\DRd}{{\Dcsp(\Rst^d)}}     %D(Rd), DcRd
\newcommand{\Dcsp}{{\boldsymbol{\mathcal D}}}     %D-space of test functions (mathcal) (analogy to Scsp)
\newcommand{\Drho}{{\operatorname{D}_{\rho}}}     %Drho (dilation operator)
\newcommand\FBsp{{\FT \Bsp}}     %FT Bsp
\newcommand{\FT}{{\operatorname{{\mathcal F} \negthinspace}}}     %Fourier transform
\newcommand\FLtKats{{\Ltmtsp \nnth \cap \FT \Ltmisp}}     %FL2 Katsnelson
\newcommand\Ltmtsp{{\Lsp^2_{m_2}}}     %L2 m2 space
\newcommand\Ltmisp{{\Lsp^2_{m_1}}}     %L2 m1 space
\newcommand{\Gsp}{{\boldsymbol G}}     %G-space
\newcommand{\LB}{ {\mathcal L} (\Bsp) }     %L(B) Operator Algebra
\newcommand{\LiRd}{{\Lisp \negthinspace (\Rst^d)}}     %L1(Rd)
\newcommand{\Lisp}{{\Lsp^1}}     %L1-space
\newcommand{\LiRdN}{\big( \LiRd, \, \|\ebbes\|_1 \big)}     %L1(Rd) Normed
\newcommand{\Lsp}{{\boldsymbol L}}     %L-space (L^1, etc.)
\newcommand{\Liwsp}{{\Lsp^1_{\negthinspace w}}}     %L1_w space
\newcommand{\LiwRd}{{\Liwsp(\Rst^d)}}     %L1w(Rd)
\newcommand{\LiwRdN}{\big( \LiwRd, \, \|\ebbes\|_{1,w} \big)}     %L1w(Rd) normed
\newcommand{\LpRd}{{\Lpsp(\Rst^d)}}     %L^p(R^d)
\newcommand{\Lpsp}{{\Lsp^p}}     %Lp-space
\newcommand{\LpRdN}{\big( \LpRd, \, \|\ebbes\|_p \big)}     %L^p(R^d) normed
\newcommand\LpmRd{{\Lpmsp(\Rdst)}}     %Lpm(Rd)
\newcommand{\Lpmsp}{{\Lsp^p_{\negthinspace m}}}     %Lpm-space
\newcommand{\LpwRd}{{\Lpwsp(\Rst^d)}}     %Lpw(Rd)
\newcommand{\Lpwsp}{{\Lsp^p_{\negthinspace w}}}     %Lpw-space
\newcommand{\LpwRdN}{\big( \LpwRd, \, \|\ebbes\|_{p,w} \big)}     %Lpw(Rd) Normed
\newcommand\LtKats{{\Ltmisp \nnth \cap \FT \Ltmtsp}}     %L2 Katsnelson
\newcommand{\LtR}{{\Ltsp(\Rst)}}     %L2(R)
\newcommand{\Ltsp}{{\Lsp^2}}     %L2-space
\newcommand{\LtRN}{\big( \LtR, \, \|\ebbes\|_2 \big)}     %L2(R) Normed
\newcommand{\LtRd}{{\Ltsp(\Rst^d)}}     %L2(Rd)
\newcommand{\LtRdN}{\big( \LtRd, \, \|\ebbes\|_2 \big)}     %L2(Rd) - normed
\newcommand\LtvsTFd{{\Lsp^2_{v_s}(\TFd)}}     %L2 vs TFd
\newcommand{\TFd}{{{ \Rdst \times \Rdsth }}}     %TF over Rd
\newcommand{\Msp}{{\boldsymbol M}}     %M-bold
\newcommand\nth{\negthinspace}     %negthinspace
\newcommand{\MiwRd}{{\Msp^1_{\nth w} (\Rdst)}}     %M^1_w(Rd)
\newcommand\MiwRdN{{\nspb \MiwRd}}     %M_w Rd Normed
\newcommand{\Miwsp}{{\Msp^1_{\nth w}}}     %M^1_w space
\newcommand\MpqmRd{{\Msp^{p,q}_{\nth m}(\Rdst)}}     %M^{p,q}_m(Rd)
\newcommand\MpvsRd{{\Msp^p_{\nth v_s}\nth (\Rdst)}}     %M^p_{vs}(Rd)
\newcommand{\MspqRd}{{\Mspqsp(\Rst^d)}}     %M^s_{p,q}(Rd)
\newcommand\Mspqsp{{\Msp^s_{\negthinspace p,q}}}     %Mspq-space (same as Mspq)
\newcommand{\Nst}{{\mathbb N}}     %N natural numbers
\newcommand\Psifam{{ \Psi = (\psi_i)_{i \in I} }}     %Psi-family = famiI psi
\newcommand\Psili{{(\Psi)_{|\Psi| \leq 1}}}     %Psi (ops) less 1
\newcommand\QsRdN{\big( \QsRd, \, \|\ebbes\|_{\Qssp} \big)}     %Q_s(Rd) Normed
\newcommand\Qssp{{\Qsp_s}}     %Q_s Shubin class Q_s
\newcommand\Qsp{ \boldsymbol Q }     %Q space (Shubin Classes), M^2_{v_s}
\newcommand\QsRd{{\Qssp(\Rdst)}}
\newcommand{\Rdsth}{{\widehat{\Rst}^d}}     %Rd-set hat, frequency domain
\newcommand{\Rtdst}{{\Rst^{2d}}}     %R two-d (set)
\newcommand{\ScPRd}{{\ScPsp(\Rst^d)}}     %Schwartz prime Rd (recommended version)
\newcommand{\ScPsp}{{\Scsp'}}     %Schwartz-prime (dual space = tempered distributions)
\newcommand{\Scsp}{{\boldsymbol{\mathcal S}}}     %Schwartz space (mathcal)
\newcommand{\ScRd}{{\Scsp(\Rst^d)}}     %Schwartz space Rd (recommended version)
\newcommand{\Sp}{\operatorname{Sp}}     %Sp(ur) : operatorname
\newcommand{\SpPsi}{\Sp_\Psi}     %Sp_Psi spline approximation
\newcommand\SpPsif{{\SpPsi \nnth f}}     %SpPsi of f
\newcommand{\Strho}{{\operatorname{St}_{\negthinspace \rho}}}     %Stretching operator by (cf. Strho)
\newcommand{\Wsp}{{\boldsymbol W}}     %W-space
\newcommand{\checkm}{{^\checkmark}}     %chechmark
\newcommand{\delo}{\delta > 0}     %delta > 0
\newcommand\delxii{{\delta_{\xi_i}}}     %delta_xii
\newcommand\epsfo{{\varepsilon/4}}     %epsilon/4
\newcommand{\epso}{{ \varepsilon > 0 }}     %epsilon > 0 (zero)
\newcommand\fBN{{ \|f\|_\Bsp}}     %f - B-norm
\newcommand\fBn{{ \|f\|_\Bsp}}     %f - B-norm (small, better)
\newcommand\hatf{{\widehat{f}}}     %f hat (cf. tilde f)
\newcommand{\hatg}{\widehat{g}}     %hat (wide) g
\newcommand\hatsi{{\widehat{\sigma}}}     %hat of sigma
\newcommand\hkr{\hookrightarrow}     %hookrightarrow (short)
\newcommand{\intRd}{\int_{\Rst^d}}     %integral over Rd
\newcommand{\inv}{^{-1}}     %inverse
\newcommand\japy{{\langle y \rangle}}
\newcommand\japx{{\langle x \rangle}}     %Japanese x bracket
\newcommand\limal{{\lim_{\alpha \to \infty} \, }}     %lim alpha to inftz
\newcommand{\lsp}{{\boldsymbol\ell}}     %ell-space (e.g. \ell^p)
\newcommand{\ltsp}{{\lsp^2}}     %l2-space
\newcommand\psii{{\psi_i}}     %psi_i, individual
\newcommand\qandq{{ \quad \mbox{and} \quad }}     %quad and quad
\newcommand\sPsi{|\Psi|}     %size Psi
\newcommand\sPsili{{|\Psi| \leq 1}}     %Size Psi leq 1
\newcommand\sPsitoz{{\, \sPsi \to 0 }}     %size Psi to zero
\newcommand{\spec}{\operatorname{spec}}     %spectrum (operatorname)
\newcommand{\sumiI}{\sum_{i\in I}}     %sum_i in I
\newcommand{\supp}{\operatorname{supp}}     %support (of a function or distribution)
\newcommand\suth{{ \, | \, } }     %SUch THat
\newcommand\veps{{\varepsilon}}     %varepsilon = eps
\newcommand{\wdash}{{ w^* \negthinspace \mbox{-}}}     %w*-dash
\newcommand{\wwst}{{\wdash \wdash}}     %wst-to-wst continuous
\newcommand\xii{{\xi_i}}     %xi_i
\def\citeX{\cite}                       %   07.08.2020
\def\kg{{k_g}}
\def\kf{{k_f}}
\def\opnorm#1#2{{|\nnth \| {#1} | \nnth \|_{#2} \, }}
\def\normta#1#2{{  \| {#1}   \|_{#2} \, }}
\def\japarg#1{{\langle #1 \rangle}}
\def\nspb#1{{ (#1,\| \ebbes \|_{#1} )}}
\def\NSPB#1{{ (#1,\| \ebbes \|_{#1} )}}
\def\Bnorm#1{{ \|#1\|_\Bsp}}
\def\normB#1{{ \| #1 \|_\Bsp }}     %Bnorm( #1 )
\newcommand{\footremember}[2]{%
	\footnote{#2}
	\newcounter{#1}
	\setcounter{#1}{\value{footnote}}%
}
\providecommand{\keywords}[1]
{
	\small	
	\textbf{\textit{Keywords:}} #1
}
\providecommand{\subjclass}[2]
{
	\small	
	\textbf{\textit{2010 Mathematics Subject Classification.}} #2
}
\begin{document}

   %  \centerline{\underline{{\bf Completeness of shifted dilates
    %  in invariant Banach spaces of tempered distributions}}}
\title{ Completeness of shifted dilates in invariant Banach spaces\\ of tempered distributions}
\author{
	Hans G. Feichtinger\footremember{Vienna}{NuHAG, Faculty of Mathematics, University of Vienna, Oskar-Morgenstern-Platz 1,1090 Vienna, AUSTRIA,
\newline  email: \tt hans.feichtinger@univie.ac.at }
	\and Anupam Gumber \footremember{Indian}{Department of Mathematics, Indian Institute of Science,560022 Bangalore, INDIA,
	email: \tt anupamgumber@iisc.ac.in}
}
\maketitle
\begin{abstract}
We show that  well-established methods from the theory of Banach modules and time-frequency analysis allow to derive completeness results for the collection of shifted and dilated version of a given (test) function in a quite general setting. While the basic ideas show strong similarity to the arguments used in a recent paper by  V.~Katsnelson
 we extend his results in several directions, both relaxing the assumptions and widening the range of applications. There is no need for the Banach spaces considered to be embedded into $\LtRN$, nor is the Hilbert space
structure relevant. We choose to present the results in the setting of
the Euclidean spaces, because then the Schwartz space $\ScPRd$ ($d \geq 1$)
of tempered distributions provides a well-established environment for
mathematical analysis. We also establish connections to modulation
spaces and   Shubin classes $\QsRdN$, showing that
they are  special cases of Katsnelson's setting (only) for $s \geq 0$.
\end{abstract}
\keywords{Beurling algebra, Shubin spaces, modulation spaces,  approximation by translations, Banach spaces of tempered distributions, Banach modules, compactness}\\
\subjclass{Primary 43A15, 41A30, 43A10, 41A65, 46F05, 46B50; Secondary 43A25, 46H25, 46A40}{Primary 43A15, 41A30, 43A10, 41A65, 46F05, 46B50; \newline Secondary 43A25, 46H25, 46A40}

\section{Introduction}

The motivation for the present paper lies in the study of \citeX{ka19-1}, which shows that the set of all shifted, dilated Gaussians is total in certain translation and modulation invariant Hilbert spaces of functions which are  continuously embedded into $\LtRN$.

By working in a more general setting we show that the results presented
in that paper can be extended in various directions. During the studies
it turned out that the setting used in \citeX{dipivi15-1} appears to
be most appropriate, although it is mainly the existence of a double
module structure (namely with respect to convolution and pointwise multiplication) which makes the key arguments work. Such Banach spaces have been discussed already long ago by the first author, under the name of {\it standard spaces} in order to study compactness in function spaces of distributions, or in order to derive a number of module theoretical properties of such spaces, as given in \citeX{brfe83}.
The setting which we choose % for the   main results
is also closely related to Triebel's systematic work
% (see his books, \citeX{tr83}, \citeX{tr92}, and \citeX{tr16})
concerning the {\it theory of function spaces}.
% which in reality
% discusses a large variety of classical Banach space of tempered distributions
% from the point of view of interpolation theory.

% hgfei777: maybe move this to the introduction

The paper is organized as follows. After providing basic notations we
will describe the setting of invariant function spaces of tempered
distributions. Since we are addressing the question of totality of
a set of shifted and dilated version of a given test function we have
to restrict our attention to Banach spaces which contain $\ScRd$ as a
dense subspace. On the other hand it is convenient and still very
general to work within the realm of tempered distributions. While the
methods employed are valid in a more general context this setting should
make the reading of the paper easier for the majority of readers.

% \newpage

In preparation of the main result we then go on to prove some
technical results concerning the approximation of convolution products
in {\it Beurling algebras}. Subsequently we will derive our main result.
The remaining sections will be devoted to an exploration
of the wide range of applicability of the result presented. In the
final section we will explain, why our results contain the key
results of Katsnelson's paper and in which sense we are going (far)
beyond the setting described in his paper.

\section{Notations and Conventions}

For a bounded linear operator $T$ on a Banach space $\BspN$ we write
$T \in \LB$,  and use the symbol $\opnorm T \Bsp$ in order to describe
its operator norm
$$  \opnorm T \Bsp = \sup_{\fBN \leq 1} \normta {T(f)} \Bsp. $$
We will make use of standard facts concerning tempered distributions.
Recall that $\DRd$, the subspace of smooth function with {\it compact
support} is a dense subspace of  the Schwartz space $\ScRd$ of {\it rapidly decreasing functions} $\ScRd$ with the standard topology.

The Fourier invariance of $\ScRd$ allows
to extend the classical Fourier transform in a unique ($\wwst$-continuous)
form by the  rule
$\hatsi(f) = \sigma(\hatf), f \in\ScRd$, for any $\sigma \in \ScPRd$.
This {\it extended Fourier transform} provides an automorphism of $\ScPRd$, and hence for any Banach space $\BspN$ continuously embedded into $\ScPRd$,
we use the symbol $\BspN \hkr \ScPRd$,  i.e.\ satisfying
\begin{equation}\label{contembscp1}
  \normta {f_n - f} \Bsp \to 0 \,\, \mbox{in} \,\, \BspN \,\, \mbox{for} \,\, n \to \infty  \quad
 \Rightarrow  \quad  f_n(g) \to f(g), \,\,\, \forall g \in \ScRd 
\end{equation}
also $\FT \Bsp$ is a well defined Banach space of distributions
with the natural norm $\|\hatf\|_{\FBsp} = \|f\|_\Bsp$, $f \in \Bsp$.

In addition to the usual function spaces such as $\LpRdN$, with
$1 \leq p \leq \infty$ we also need their weighted versions. Given
as strictly positive weight $w(x)>0 $ we obtain   Banach spaces
$$\LpwRd := \{ f  \suth fw \in \LpRd \}, \quad \mbox{resp.} \,\,\,
\Bsp_w = \{f \suth  fw \in \Bsp \}
$$
with the  norm $\|f\|_{p,w} = \normta f \LpwRd = \normta {f\,w} \LpRd$.
% Here we assume of course that $w(x) > 0$ for all $x \in \Rdst$.

We will be only interested in translation invariant function spaces of this
form, hence we restrict our attention to {\it moderate} weight functions, which (without loss of generality) can be assumed to be strictly positive and continuous.

On such spaces every translation operator, defined   via
$T_z f(x) = f(x-z)$ is bounded   on $\LpwRdN$ and
the operator norm $w(z) := \opnorm {T_z} \Bsp$ is a so-called {\it Beurling
weight function}, i.e.\ a strictly positive {\it submultiplicative function}
satisfying
\begin{equation}\label{Beurlwgt1}
  w(x+y) \leq w(x) \,
  w(y), \quad x,y \in \Rdst.
\end{equation}
Such weight functions generate weighted $\Lisp$-spaces which are
Banach algebras with respect to {\it convolution} (so-called {\it
Beurling algebras}), due to  the pointwise  (a.e.) estimate
\begin{equation} \label{ptwconvw1}
  |f \ast g| w \leq  |f|w \ast |g|w, \quad f,g \in \LiwRd,
\end{equation}
which imply the norm estimate
\begin{equation} \label{ptwconvw2}
  \normta  {f \ast g} {{1,w}}  \leq \normta  {f} {{1,w}}  \normta  { g} {{1,w}},
  \quad f,g \in \LiwRd.
\end{equation}
Details concerning these so-called {\it Beurling algebras} are found in
Reiter's book \cite{re68} (or \cite{rest00}), Ch.2.6.3.
Among others they are translation invariant, with the property
\begin{equation}\label{transBeurl1}
  \|T_x f\|_{1,w} \leq w(x) \|f\|_{1,w}, \quad \forall x \in \Rdst, f \in \LiwRd,
\end{equation}
and that shifts depend continuously on $x$, meaning that $x \to T_x f$
is a continuous mapping from $\Rdst$ to $\LiwRdN$, or equivalently
\begin{equation}\label{transcontw2}
  \lim_{x \to 0}  \| T_x f - f\|_{1,w} = 0, \quad \forall f \in \LiwRd.
\end{equation}
Occasionally we will work with the Banach convolution algebra
 $\MiwRdN$, the space of Radon measures $\mu$
such that $ w \mu$ is a bounded measure, or equivalently, the dual
space of $\nspb \CuOiwRd$. It contains $\LiwRdN$ as a closed ideal.

It is a general fact that a positive, continuous weight function
$m$ is {\it moderate} ({\it  with respect to} $w$) if and only if it satisfies
\begin{equation}\label{moddef04}
  m(x+y) \leq m(x) \,  w(y), \quad x,y \in \Rdst
\end{equation}
for some submultiplicative weight function $w$ (\citeX{fe79,gr07}).
In this note we will concentrate on {\it polynomially moderated}
weights, i.e.\ weights for which one can use
\begin{equation} \label{japdefs}
w(x)= w_s(x):= \japx^s = (1+ |x|^2)^{1/2} \approx (1+|x|)^s,\quad s \geq 0.
\end{equation}
Any function $ \japx^s, s \in \Rst$
is a moderate weight, with respect to $\japx^{|s|}$.
Consequently the weighted, translation invariant
spaces $\LpmRd$ will be continuously embedded into $\ScPRd$.

Beurling algebras share several important properties with $\LiRdN$:
Compactly supported functions are dense, and there are
bounded approximate identities in $\LiwRdN$, i.e.\ bounded nets
$(e_\alpha)_{\alpha \in I}$ such that for each $h \in \LiwRd$:
\begin{equation}\label{apprid03}
  \| e_\alpha \ast h - h \|_{1,w} \to 0 \,\, \mbox{for} \,\, \alpha \to \infty.
\end{equation}
The boundedness of such a family also allows to extend this property to
relatively compact sets, hence one has:
For every relatively compact set $M \subset \BspN$ and $\epso$ one
can find some index $\alpha_0$ such that  $\alpha \succ \alpha_0$
implies:
\begin{equation} \label{compappr1}
 \normta { e_\alpha \ast h - h}  {1,w} \leq \veps, \quad \forall h \in M.
\end{equation}

For us,  approximate identities obtained by compression of a given function
$g \in\LiwRd$ with $\hat{g}(0) \neq 0$ will be important. Without loss of
generality let us assume that $\hat{g}(0) = \intRd g(x)dx = 1$.
We use the $\Lisp$-isometric compression
\begin{equation}\label{Strohdef03}
g_\rho(x) =  \Strho g(x) = \rho^{-d}  g(x/\rho), \quad \rho > 0,
\end{equation}
with $\supp(\Strho g) = \rho \supp(g)$ and
$ \normta {g_\rho} \Lisp =
\normta {\Strho g} \Lisp = \normta g \Lisp$ for $g \in \LiRdN$.

For any {\it radial symmetric},  {\it increasing} weight $w$   
satisfying $w(y) \leq w(x)$ if $|y| \leq |x|$, one has 
\begin{equation}\label{Strohest1}
  \normta  {g_\rho}  {1,w} =
  \normta {\Strho g} {1,w} \leq
  \normta {g} {1,w} \quad \mbox{for all} \,\, \rho \in (0,1).
\end{equation}
Without loss of generality we will make this assumption concerning $w$,
satisfied by all the usual examples. In the general case it is possible to
replace a given weight function by another one satisfying this
condition. % and dominating the given one.
With this extra condition
we  obtain bounded approximate identities by compression:
\begin{lemma} \label{StrhoDirac1}
For any $f \in \LiwRd$ and $g \in \LiwRd$ with $\hat{g}(0)=1$
one has
\begin{equation}\label{apprid04}
  \lim_{\rho \to 0}  \, \normta  {\, g_\rho \ast f - f}  {1,w} =
  \lim_{\rho \to 0}  \, \normta  {\, \Strho g \ast f - f}  {1,w} = 0. % ,  \quad \mbox{for} \,\, \rho \to 0.
\end{equation}
\end{lemma}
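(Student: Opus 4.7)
The plan is to follow the classical template for proving that convolution with a normalized compression yields an approximate identity, but to carry the weight $w$ through every step. Since $\hat g(0) = \intRd g(x)\,dx = 1$, a change of variables shows $\intRd g_\rho(y)\,dy = 1$ for all $\rho > 0$, so I can rewrite
$$ (g_\rho \ast f)(x) - f(x) \;=\; \intRd g_\rho(y)\bigl[f(x-y) - f(x)\bigr]\,dy. $$
Taking absolute values, multiplying by $w(x)$, integrating in $x$, and applying Fubini (a weighted Minkowski inequality for integrals) yields
$$ \normta{g_\rho \ast f - f}{1,w} \;\leq\; \intRd |g_\rho(y)|\,\normta{T_y f - f}{1,w}\,dy. $$

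Next I would split this integral at a radius $\delta > 0$. On $\{|y| \leq \delta\}$, the continuity of translation in $\LiwRdN$, namely \eqref{transcontw2}, allows me to choose $\delta$ (depending on $f$ and $\veps$) so that $\normta{T_y f - f}{1,w} < \veps$, and \eqref{Strohest1} bounds $\normta{g_\rho}{1,w}$ by $\normta{g}{1,w}$ uniformly for $\rho \in (0,1)$; this contributes at most $\veps \,\normta{g}{1,w}$. On $\{|y| > \delta\}$, I would use the crude estimate $\normta{T_y f - f}{1,w} \leq (1 + w(y))\,\normta{f}{1,w}$ coming from \eqref{transBeurl1}, so that the remaining task reduces to showing
$$ \int_{|y| > \delta} |g_\rho(y)|\,(1 + w(y))\,dy \;\longrightarrow\; 0 \quad \text{as } \rho \to 0. $$

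The substitution $u = y/\rho$ turns this integral into $\int_{|u| > \delta/\rho} |g(u)|\,(1 + w(\rho u))\,du$, and the monotonicity/radial symmetry imposed on $w$ just before the lemma gives $w(\rho u) \leq w(u)$ for $\rho \in (0,1)$, producing the fixed integrable majorant $|g(u)|(1 + w(u))$, which belongs to $\Lisp(\Rdst)$ because $g \in \LiwRd$. Since $\delta/\rho \to \infty$ as $\rho \to 0$, the tail integral vanishes by dominated convergence, and combining with the first contribution and letting $\veps \to 0$ finishes the argument. The main obstacle is precisely this last step: without the monotonicity hypothesis on $w$, the factor $w(\rho u)$ could fail to admit a $\rho$-uniform integrable majorant, so the whole argument hinges on the standing assumption on $w$ made right before the lemma.
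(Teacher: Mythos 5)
Your argument is correct and complete. The paper states Lemma \ref{StrhoDirac1} without proof, so there is no in-text argument to compare against; yours is the classical approximate-identity proof carried through with the weight (weighted Minkowski inequality, a near/far split using the continuity of translation \eqref{transcontw2} on $\{|y|\leq\delta\}$ and the crude bound from \eqref{transBeurl1} on $\{|y|>\delta\}$, then the substitution $u=y/\rho$ with the fixed integrable majorant $|g(u)|(1+w(u))$), and you correctly identify that the radial, increasing normalization of $w$ imposed just before the lemma is precisely what makes the dominated-convergence step for the tail go through.
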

% UUU
% We will restrict our attention to {\it polynomial weight functions}
% which can be assumed to be of  the  form (using Japanese brackets)
% $ w(x) = \japx^s  :=  (1+ |x|^2)^{1/2} \approx (1+|x|)^s$, for
% $s \geq 0$. Such weight functions
Polynomial weights $w_s$
satisfy the so-called {\it Beurling-Domar condition} (see \citeX{rest00})
and hence
$$ \{ f \suth f \in \LiwRd,  \spec(f)  = \supp(\hatf) \, \,\, \mbox{is compact} \, \} $$
the subset of all {\it band-limited} elements
is a dense subspace of $\LiwRdN$.

For two topological vector spaces $\Bisp$ and $\Btsp$ we will write
$\Bisp \hookrightarrow \Btsp$ if the embedding is continuous\footnote{In contrast
to \citeX{dipivi15-1} we do not assume density of the embedding whenever we
use this symbol. We rather prefer to put this as an explicit extra assumption.}. If both
of them are normed spaces this means of course that there exists
some constant $C > 0$ such that $\normta f \Btsp \leq C \normta f \Bisp$,
for all $f \in \Bisp \subset \Btsp$. For Banach spaces continuously
embedded into $\ScPRd$ the boundedness of an inclusion mapping follows
from the simple inclusion $\Bisp \subseteq \Btsp$, via the Closed Graph Theorem.

For the rest of this paper we will work with
the following {\bf standard assumptions}, similar
to the setting chosen in \cite{dipivi15-1}:
\begin{definition} \label{mintempstanddef}
A Banach space $\BspN$ is called a {\it minimal tempered standard space}
(abbreviated as {\bf MINTSTA})
if the following conditions are valid:
\begin{enumerate} \item One has the following sandwiching property:
\begin{equation}\label{ScSandw}
   \ScRd \hookrightarrow \BspN \hookrightarrow  \ScPRd;
\end{equation}
\item
$\ScRd$ is dense in $\BspN$ (minimality);
\item $\BspN$ is translation invariant, and for
some $n_1 \in \Nst$ and $C_1 > 0 $ one has
\begin{equation}\label{transl1}
  \|T_x f\|_\Bsp \leq C_1 \japx^{n_1} \|f\|_\Bsp \quad  \forall  x \in \Rdst;
\end{equation}
\item  $\BspN$ is modulation invariant, and for
some $n_2 \in \Nst$ and $C_2 > 0 $ one has
\begin{equation}\label{modul1}
  \|M_y f\|_\Bsp \leq C_2 \japy^{n_2} \|f\|_\Bsp \quad \forall y \in \Rdst.
\end{equation}
 \end{enumerate}
\end{definition}   % label{ScSandw}

\begin{remark}
The notion of MINTSTAs relates the approach to the use of ``standard
spaces'' in the work of the first author, starting in the 70th, specifically
the use of double module properties in \citeX{brfe83}.

The formal definition provided here is inspired by the work \citeX{dipivi15-1}, where such spaces are called TMIBs. The density of
$\ScRd$ is a part of their definition. The few interesting spaces
which do not satisfy this extra condition are typically DTMIBs in their
terminology (dual translation, modulation invariant Banach spaces). See also
\cite{dipiprvi19} for a more general setting.
\end{remark}

\begin{remark}
The term {\it Banach spaces in standard situation}  has been used
in a number of papers of the first author, e.g.\ in order to prove
results about compactness in such spaces (\citeX{fe84}), in order
to introduce Wiener amalgam spaces (\citeX{fe83}), or in order to study
spaces with a double module structure (\citeX{brfe83}). In each of these
cases it is important that it is meaningful for the objects under
consideration (functions, measures or distributions) to allow pointwise
products with suitable test functions (leading to a localization), and
make use of this fact that this is definitely possible for elements
in the dual of a space $\Asp_c = \CcRd \cap \Asp$, of compactly
supported test functions, where $\Asp$ is a suitable pointwise
Banach algebra of test functions which is also translation invariant.
\end{remark}

\begin{remark}
The situation described in Definition \ref{mintempstanddef} are special
cases of this more general notion of {\it standard spaces}, with the
main restriction (more or less made for the convenience of the reader,
and in accordance with \citeX{dipivi15-1}) that we assume $\BspN \hkr \ScPRd$.

One should also observe that for a non-trivial Banach space of tempered
distributions satisfying the invariance properties (under translation
and modulation) one always has the continuous embedding $\ScRd \hkr \BspN$.
In fact, given properties 3. and 4. in Definition \ref{mintempstanddef}
one can show that there is a minimal space in the corresponding family of spaces,
namely $\Wsp(\FT \Lsp^1_{v_{n_2}},\lsp^1_{v_{n_1}})$ according to
\citeX{fe87-1}, and hence  the following chain of inclusions is valid:
\begin{equation}\label{ScMinEmb}
  \ScRd \hkr  \Wsp(\FT \Lsp^1_{v_{n_2}},\lsp^1_{v_{n_1}}) \hkr \BspN.
\end{equation}
The minimality condition ensures that these embeddings are dense
embeddings.
\end{remark}
The following result % can be taken as an easy exercise, but it
is stated  for later reference. The proof is left to the reader as an exercise.
\begin{proposition} \label{InvarIntersec}

\noindent (i) For any MINTSTA $\BspN$ also its Fourier version
  $\FBsp = \{ \hatf \suth f \in \Bsp\}$
  is a MINTSTA  with respect to the natural norm
\begin{equation} \label{FBnorm1}
\normta  \hatf \FBsp  =  \fBN, \quad f \in \Bsp.
\end{equation}

\noindent (ii)  Given two MINTSTAs $\BispN$ and $\BtspN$,  also their
intersection (or their sum) is a MINTSTA, with the corresponding
natural norms, e.g.
$$ \|f\|_{\Bisp \cap \Btsp} :=  \|f\|_\Bisp + \|f\|_\Btsp,
  \quad f  \in \Bisp \cap \Btsp.$$
\end{proposition}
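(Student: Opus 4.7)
For part (i) the essential input is that the Fourier transform is a topological automorphism of both $\ScRd$ and $\ScPRd$ and that it intertwines translations and modulations via
\begin{equation*}
  \FT T_x = M_{-x} \FT, \qquad \FT M_y = T_y \FT \qquad \text{on } \ScPRd.
\end{equation*}
Applying $\FT$ to the sandwich \eqref{ScSandw} for $\Bsp$ yields the sandwich $\ScRd \hkr \FBsp \hkr \ScPRd$ together with the isometric identification $\|\hatf\|_{\FBsp} = \|f\|_\Bsp$ that defines the norm; density of $\ScRd$ in $\FBsp$ transfers because $\hat\phi_n \in \ScRd$ whenever $\phi_n \in \ScRd$ and $\phi_n \to f$ in $\Bsp$. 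The translation estimate for $\FBsp$ then reads
\begin{equation*}
  \|T_x \hatf\|_{\FBsp} = \|\FT(M_{-x} f)\|_{\FBsp} = \|M_{-x} f\|_\Bsp \le C_2 \japx^{n_2} \|\hatf\|_{\FBsp},
\end{equation*}
and symmetrically $\|M_y \hatf\|_{\FBsp} \le C_1 \japy^{n_1} \|\hatf\|_{\FBsp}$, so $\FBsp$ is a MINTSTA with the roles of $n_1, n_2$ swapped.

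For part (ii) I treat the intersection (the sum is analogous). Writing the invariance constants of $\Bisp$ and $\Btsp$ as $C_i^{(1)}, n_i^{(1)}$ and $C_i^{(2)}, n_i^{(2)}$ respectively, the translation and modulation bounds for the sum norm $\|\ebbes\|_{\Bisp \cap \Btsp} = \|\ebbes\|_\Bisp + \|\ebbes\|_\Btsp$ are obtained by adding the component estimates, with constants $C_i^{(1)} + C_i^{(2)}$ and exponents $\max(n_i^{(1)}, n_i^{(2)})$. Completeness is routine: a Cauchy sequence in the sum norm is Cauchy in each factor, and the two limits necessarily coincide in $\ScPRd$ by continuity of the two embeddings, so they define a single element of $\Bisp \cap \Btsp$. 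The sandwich $\ScRd \hkr \Bisp \cap \Btsp \hkr \ScPRd$ follows directly from the corresponding sandwiches for $\Bisp$ and $\Btsp$, the left embedding by adding the two continuity inequalities for $\phi \in \ScRd$.

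The one genuinely nontrivial point is the density of $\ScRd$ in $\Bisp \cap \Btsp$, since individually $\ScRd$-dense approximants in $\Bisp$ and in $\Btsp$ need not be common. The plan is a regularization-and-truncation argument producing simultaneous approximants. Given $f \in \Bisp \cap \Btsp$, fix a Schwartz mollifier $g$ with $\hatg(0) = 1$ and realize $g_\rho \ast f$ as the vector-valued integral $\intRd g_\rho(y) T_y f\, dy$; the polynomial bound \eqref{transl1}, applied in both spaces, combined with the rapid decay of $g_\rho$, makes this integral absolutely convergent in each of $\Bisp$ and $\Btsp$. A standard density argument (verifying the claim on the dense subspace $\ScRd$, then extending via the uniform operator bound, in analogy with Lemma \ref{StrhoDirac1}) yields $g_\rho \ast f \to f$ in both norms as $\rho \to 0$. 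The convolutions $g_\rho \ast f$ are smooth but possibly of slow decay; multiplying by a dilated cutoff $\chi_R \in \DRd$ with $\chi_R \equiv 1$ on $\{|x| \le R\}$ produces an element of $\ScRd$, and the convergence $\chi_R \cdot (g_\rho \ast f) \to g_\rho \ast f$ in both norms as $R \to \infty$ is the Fourier-dual companion to the previous step, now exploiting the modulation bound \eqref{modul1}. The main obstacle is precisely this simultaneity; the point is that the universal regularizers $g_\rho$ and cutoffs $\chi_R$ are independent of the ambient space, so a diagonal extraction produces the desired sequence in $\ScRd$ converging to $f$ in $\Bisp \cap \Btsp$.
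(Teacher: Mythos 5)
Your proposal is correct, and since the paper explicitly leaves this proposition as an exercise there is no in-text proof to diverge from; your argument uses precisely the machinery the paper develops elsewhere (the intertwining $\FT T_x = M_{-x}\FT$, $\FT M_y = T_y\FT$ for part (i), and for part (ii) the double-module regularization $\sigma \mapsto \Strho g_0 \ast (\Drho g_0 \cdot \sigma)$ that the paper itself invokes in the proof of Lemma \ref{invarprops1}(iii), with the order of convolution and pointwise multiplication harmlessly swapped). You correctly isolate the only nontrivial point, namely the \emph{simultaneous} approximation needed for density of $\ScRd$ in $\Bisp \cap \Btsp$, and resolve it by the observation that the regularizers $g_\rho$ and cutoffs $\chi_R$ are universal. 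Two cosmetic remarks: in the displayed translation estimate you write $T_x\hatf = \FT(M_{-x}f)$ whereas your own intertwining relation gives $\FT(M_x f)$ — immaterial since $\japx$ is even; and for the sum space the density of $\ScRd$ is in fact easier (approximate each summand separately), while completeness requires the standard quotient-norm argument using that the diagonal is closed in $\Bisp\oplus\Btsp$ because both spaces embed into $\ScPRd$.
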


\subsection{Equivalent Assumptions}

Let us first discuss a few alternative assumptions which lead
to the same family of spaces.
\begin{lemma} \label{invarprops1}

\noindent
i) Assume that $\BspN$ is a Banach space satisfying conditions 1. and 2. of Definition  \ref{mintempstanddef}. Then 3. and 4. together are equivalent to the
claim that the space $\BspN$ is invariant under TF-shifts
$\pi(z) = M_y T_x$, with $z = (x,y) \in \Rtdst$,
and that for some constant $C_3 > 0$ and $s \geq 0$ one has:
\begin{equation} \label{pizestim1}
\opnorm {\pi(z)} \Bsp  \leq C_3 {\japarg z }^s,
\quad \forall z \in \Rtdst
\end{equation}
or equivalently described:
\begin{equation} \label{pizestim2}
\normta {\pi(z)f } \Bsp  \leq C_3 {\japarg z }^s \fBN
\quad \forall z \in \Rtdst, \forall f \in \Bsp.
\end{equation}

\noindent
ii) For any {\it minimal tempered standard space} one has: for any $g \in \Bsp$
\begin{equation}\label{contpiz1}
   z \mapsto \pi(z)g \,\,\mbox{ is continuous from} \,\, \Rtdst \,\, \mbox{to} \,\, \, \BspN.
\end{equation}

\noindent
iii) Conversely, assuming that a Banach space   $\BspN$ continuously
embedded into $\ScPRd$ satisfies (\ref{pizestim2}) and (\ref{contpiz1}).
Then (\ref{transl1}) and (\ref{modul1}) are valid, for $n_1 = s = n_2$.
Moreover, $\ScRd$ is embedded into $\BspN$ as a dense subspace\footnote{This fact justifies the use of the word minimality.}.
\end{lemma}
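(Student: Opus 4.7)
The plan is to treat the three parts in turn. Part (i) is a direct algebraic manipulation: in the forward direction, writing $\pi(z) = M_y T_x$ and using submultiplicativity of the operator norm together with $\japx, \japy \leq \japarg{z}$ (for $z = (x,y)$) gives
\[
\opnorm{\pi(z)}{\Bsp} \leq \opnorm{M_y}{\Bsp} \opnorm{T_x}{\Bsp} \leq C_1 C_2 \japarg{z}^{n_1 + n_2},
\]
so $s := n_1 + n_2$ works; the converse specializes $z = (x,0)$ and $z = (0,y)$ and uses $\japarg{(x,0)} = \japx$, $\japarg{(0,y)} = \japy$ to recover (\ref{transl1}) and (\ref{modul1}). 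The same specialization gives the first claim of (iii) with $n_1 = s = n_2$.

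For part (ii), the plan is a three-epsilon reduction. For $g_0 \in \ScRd$, the map $z \mapsto \pi(z) g_0$ is continuous from $\Rtdst$ into $\ScRd$ (a classical property of the Schwartz topology), and composing with the continuous embedding $\ScRd \hookrightarrow \BspN$ from Definition~\ref{mintempstanddef}(1) gives continuity into $\BspN$. For arbitrary $g \in \Bsp$, approximate by $g_0 \in \ScRd$ using density and apply the triangle inequality, using the local bound $\sup_{|z|\leq 1} \opnorm{\pi(z)}{\Bsp} < \infty$ from part (i) to control the contribution of $\pi(z)(g - g_0)$; this yields continuity at $0$. Continuity at arbitrary $z_0$ then follows from the Heisenberg commutation $\pi(z_0 + h) = c(z_0, h)\, \pi(z_0)\, \pi(h)$ with $|c(z_0,h)| = 1$, boundedness of $\pi(z_0)$ on $\Bsp$, and continuity at $0$.

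The substantive work of part (iii) is the dense continuous embedding $\ScRd \hookrightarrow \BspN$. My plan is a bootstrap via vector-valued Bochner integrals in $\Bsp$, whose convergence is guaranteed by (\ref{pizestim2}) and (\ref{contpiz1}) together with the rapid decay of Schwartz integrands. First, for any $f \in \Bsp$ and fixed $g, \psi \in \ScRd$ with $\int g = 1 = \psi(0)$ (with $g_\rho, \psi_\alpha$ their rescalings as in (\ref{Strohdef03})), define the double mollification
\[
f_{\rho,\alpha} := \int_{\Rdst} \widehat{\psi_\alpha}(y)\, M_y \!\left( \int_{\Rdst} g_\rho(x)\, T_x f\, dx \right) dy.
\]
A dominated-convergence argument in the spirit of Lemma~\ref{StrhoDirac1} shows $f_{\rho,\alpha} \to f$ in $\Bsp$ as $\rho, \alpha \to 0$, while distributionally $f_{\rho,\alpha} = \psi_\alpha \cdot (g_\rho * f)$ is the product of a Schwartz function with a smooth function of polynomial growth, hence lies in $\ScRd$. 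Applied to any nonzero $f \in \Bsp$, this produces a nonzero window $g_0 \in \ScRd \cap \Bsp$. In the second stage, for arbitrary $\phi \in \ScRd$ the STFT reproducing formula
\[
\phi = \frac{1}{\|g_0\|_2^2} \int_{\Rtdst} V_{g_0}\phi(z)\, \pi(z)\, g_0\, dz
\]
converges absolutely as a Bochner integral in $\Bsp$, since $V_{g_0}\phi \in \Scsp(\Rtdst)$ dominates the polynomial factor $\japarg{z}^s \opnorm{\pi(z)}{\Bsp} \|g_0\|_\Bsp$; testing against Schwartz functions identifies this $\Bsp$-integral with $\phi$ in $\ScPRd$, yielding $\phi \in \Bsp$ together with a continuous Schwartz seminorm bound on $\|\phi\|_\Bsp$. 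I expect this bootstrap — first mollifying to manufacture one Schwartz element of $\Bsp$, then reproducing every Schwartz function from it — to be the main obstacle; a shortcut would be to invoke the minimal invariant space $\Wsp(\FT\Lsp^1_{v_{n_2}},\lsp^1_{v_{n_1}})$ of \cite{fe87-1} mentioned in the remark, giving $\ScRd \hookrightarrow \Wsp \hookrightarrow \Bsp$ directly.
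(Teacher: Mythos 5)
Your parts (i) and (ii) coincide with the paper's argument: operator-norm submultiplicativity with $s=n_1+n_2$ for one direction, specialization of $z$ for the other, and the standard density-plus-uniform-local-bound argument for continuity of $z\mapsto\pi(z)g$ (the paper is terser but does exactly this). The genuine divergence is in part (iii). The paper disposes of it in three lines: it invokes the regularization $\sigma\mapsto\Strho g_0\ast(\Drho g_0\cdot\sigma)$, asserts that it maps $\Bsp$ into $\ScRd$ while approximating in $\normB{\ebbes}$, and refers to \cite{brfe83} for the characterization of the closure of $\ScRd$ as $\Bsp_{\nnth\Asp\Gsp}=\Bsp_{\nnth\Gsp\Asp}$; notably it does not spell out why $\ScRd\subseteq\Bsp$ \emph{continuously}, which is the harder half of the claim. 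Your two-stage bootstrap supplies exactly that missing piece: stage one is the same double mollification as the paper's (with convolution and multiplication in the opposite order, which is immaterial here) and yields one nonzero window $g_0\in\ScRd\cap\Bsp$ plus density of $\ScRd\cap\Bsp$; stage two uses the absolutely convergent Bochner integral $\phi=\normta{g_0}{2}^{-2}\int V_{g_0}\phi(z)\,\pi(z)g_0\,dz$, dominated by the Schwartz decay of $V_{g_0}\phi$ against the polynomial bound $\japarg{z}^s$, to get $\phi\in\Bsp$ with a continuous Schwartz-seminorm estimate. This is precisely the minimality mechanism behind the chain $\ScRd\hkr\Wsp(\FT\Lsp^1_{v_{n_2}},\lsp^1_{v_{n_1}})\hkr\BspN$ stated in the paper's remark, so your argument is a self-contained rendering of what the paper outsources to \cite{brfe83} and \cite{fe87-1}. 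What your route buys is independence from those references (at the cost of needing $V_{g_0}\phi\in\Scsp(\Rtdst)$ and the weak-$*$ identification of the vector-valued integrals, both standard); the only loose ends worth noting are the tacit assumption $\Bsp\neq\{0\}$ when manufacturing a nonzero $g_0$, and a word on how the two limits $\rho,\alpha\to 0$ are taken, neither of which is a real obstacle.
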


\begin{proof}
i) It is clear that the estimates (\ref{transl1}) and (\ref{modul1})
are just special cases of (\ref{pizestim1}), e.g. for $n_1 = n = n_2$
being any integer $n$ with $s \leq n$.

Conversely assume that the two estimates (\ref{transl1}) and (\ref{modul1})
are valid, and that we have to estimate the norm of $\pi(z)g$ in $\BspN$.
Clearly
\begin{equation}\label{pizestim02}
  \normta {\pi(z)g} \Bsp = \normta {M_y T_x g} \Bsp \leq
    \opnorm  {M_y} \Bsp \opnorm {T_x} \Bsp \|g\|_\Bsp
    \leq  C_1 C_2 \japarg{y}^{n_2} \japarg{x}^{n_1} \normta g \Bsp.
\end{equation}
By choosing $s = n_1 + n_2 $ we obtain for $C_3 = C_1 C_2$:
\begin{equation}\label{pizestim03}
  \normta {\pi(z)g} \Bsp
    \leq  C_1 C_2 \japarg{z}^{n_2} \japarg{z}^{n_1} \normta g \Bsp
    \leq C_3 \japarg{z}^s.\end{equation}

ii) The continuous shift property is clear for $g \in \ScRd$ in the
Schwartz topology. Due to the continuous embedding of $\ScRd$ into
$\BspN$ condition (\ref{contpiz1}) is valid for $g \in \ScRd$. Using
the (uniform) boundedness of TF-shifts with say $|z| \leq 1$ it follows
easily that (\ref{contpiz1}) is valid for any $g \in \Bsp$ by the
usual approximation argument.

% \noindent
iii)
The continuity of translation   implies that every
element $g \in \BspN \subset \ScPRd$ can be regularized, i.e. it
can be approximated by functions in the Schwartz space, because the
usual regularization procedures of the form $ \sigma \mapsto
\Strho g_0 \ast(\Drho g_0 \cdot g)$ map $g$ into $\ScRd$, but
also approximate $g$ in $\BspN$. Details are found
 in \cite{brfe83}, where the closure of $\ScRd$ is
characterized as % $\Bsp_{\nnth \Asp \cG} = \Bsp_{\nnth \cG \Asp}$.
 $\Bsp_{\nnth \Asp \Gsp} = \Bsp_{\nnth \Gsp \Asp}$.
% \end{enumerate}
 %
\end{proof}

     When comparing with the setting of \citeX{dipivi15-1} we have
     the following connection:
\begin{lemma} \label{charminFOUSS}
A Banach space $\BspN$ is a  minimal %  Banach space
tempered Fourier standard space
if and only if $\BspN$ as well as its Fourier image
$\FT \Bsp$, with the norm $\|\hatf\|_{\FT \Bsp} = \fBn $
are translation invariant Banach spaces of distributions
in the sense of \cite{dipivi15-1} containing $\DRd$ as a dense
subspace.
\end{lemma}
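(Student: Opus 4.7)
The strategy is to prove both implications by unfolding the definitions on each side. For the forward implication I will assume $\Bsp$ is a MINTSTA. By Proposition~\ref{InvarIntersec}(i), $\FT\Bsp$ equipped with the natural norm is again a MINTSTA, so the task reduces to showing that every MINTSTA is a translation invariant Banach space of distributions in the sense of \cite{dipivi15-1} and that $\DRd$ sits densely inside it. The continuous embedding into $\ScPRd$ and the polynomial translation bound \eqref{transl1} are already baked into Definition~\ref{mintempstanddef}. Density of $\DRd$ in $\BspN$ is then obtained by concatenating three facts: $\ScRd$ is dense in $\Bsp$ (minimality), $\DRd$ is dense in $\ScRd$ in the Schwartz topology (via a standard smooth cutoff argument), and the inclusion $\ScRd \hkr \Bsp$ is continuous.

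For the reverse direction I will assume that $\Bsp$ and $(\FT\Bsp, \, \|\hatf\|_{\FT\Bsp} = \fBn)$ are both translation invariant Banach spaces of distributions with $\DRd$ densely included, and then verify conditions 1--4 of Definition~\ref{mintempstanddef}. Density of $\ScRd$ in $\Bsp$ is immediate from $\DRd \subset \ScRd \subset \Bsp$ together with the density of $\DRd$. Translation invariance with polynomial bound is part of the hypothesis on $\Bsp$ itself. Modulation invariance of $\Bsp$ with a polynomial bound will be extracted from Fourier duality: because $\FT \circ M_y = T_{-y} \circ \FT$ and the Fourier transform is by the definition of the target norm an isometric isomorphism $\Bsp \to \FT\Bsp$, one has $\|M_y f\|_\Bsp = \|T_{-y}\hatf\|_{\FT\Bsp}$, so the polynomial translation bound available in $\FT\Bsp$ transfers verbatim to \eqref{modul1}. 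The continuous embedding $\ScRd \hkr \Bsp$ is then recovered from the chain $\ScRd \hkr \Wsp(\FT\Lsp^1_{v_{n_2}},\lsp^1_{v_{n_1}}) \hkr \Bsp$ pointed out in the Remark following Proposition~\ref{InvarIntersec}, which requires only the polynomial translation and modulation bounds just established.

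The main obstacle is essentially bookkeeping rather than analysis: one must verify that the notion of translation invariant Banach space of distributions used in \cite{dipivi15-1} indeed comes equipped with a polynomial operator-norm estimate of the form \eqref{transl1} (rather than merely pointwise boundedness of each $T_x$), and that the convention $\|\hatf\|_{\FT\Bsp} = \fBn$ turns $\FT$ into a genuine isometry between $\Bsp$ and $\FT\Bsp$. Once these conventions are checked to align, no further analysis is required beyond Fourier duality and the results already assembled earlier in this section.
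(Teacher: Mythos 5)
Your proof is correct and follows essentially the same route as the paper: the paper likewise invokes Theorem~1 of \cite{dipivi15-1} to obtain conditions 1--3 of Definition~\ref{mintempstanddef} and then uses the correspondence between modulation on the time side and translation on the Fourier side (together with the isometry built into the norm on $\FT\Bsp$) to identify condition 4 with the polynomial translation estimate for $\FT\Bsp$. Your version merely spells out the two implications and the density bookkeeping more explicitly than the paper does.
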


\begin{proof}
According to Theorem 1 of \citeX{dipivi15-1} a {\it translation invariant
Banach space of tempered distributions} satisfies conditions
1. to 3. of our definition. Being sandwiched between $\ScRd$
and $\ScPRd$ its Fourier transform  $\FBsp$ is a well-defined
Banach space, which is itself again a Banach space in sandwich position.

The fact that translation on the Fourier transform side corresponds
to modulation on the time-side (combined with the corresponding)
implies immediately that the validity of 4. in Definition  \ref{mintempstanddef}
is equivalent to a polynomial estimate of the translation operator
for $\nspb \FBsp$.
\end{proof}

\begin{remark}
It is noteworthy to mention that the
sandwiching properties  1. and 2. above follow often from 3. and 4., e.g.
if $\BspN$ is a solid BF-space containing
 $$\CcRd = \{k \suth k  \,  \mbox{continuous, complex
valued on} \, \Rdst, \mbox{with} \supp(k) \, \mbox{compact} \}$$
as a dense subspace and satisfying 3.
(because 4. above is trivial for solid  spaces, see \citeX{brfe83,fe79}).
\end{remark}

By a slight adaptation of the terminology of  Y.~Katznelson
 (see \cite{ka76}) we call a Banach space $\BspN$ in ``sandwich position''
a {\it homogeneous Banach space of tempered distributions} if
\begin{enumerate}
  \item Translations are isometric on $\BspN$:
  $ \normta {T_xf} \Bsp = \normta  f \Bsp, \quad \forall f \in \Bsp; $
  \item translation is continuous, i.e.
  $ \lim_{x \to 0} \|T_x f - f\|_\Bsp = 0 \quad \forall f \in \Bsp. $
\end{enumerate}

\section{Discretization of convolution in Beurling algebras}

For the rest we assume that the weight function $w$ is not only a
continuous and submultiplicative function on $\Rdst$, but in addition
that it is radial symmetric, with increasing profile,
i.e.\ $w(y) \leq w(x)$ whenever $|y| \leq |x|, x,y \in \Rdst$.
This is no loss of generality, because any general weight function
(of polynomial growth) is dominated by another submultiplicative function
with this extra property. The main advantage of this assumption is the
fact that it implies that the dilation operator $g \mapsto \Strho g$
is non-expansive on $\LiwRdN$ as well as on $\NSPB \MiwRd$ for $\rho \in (0,1)$.

In short, based on a variant of the key result of \citeX{fe16},
$\BspN$ is a {\it Banach module over} $\LiwRdN$, and in fact over
$\NSPB \MiwRd$. Hence we have
\begin{equation}\label{MiwModul1}
   \| \mu \ast f\|_\Bsp \leq \|\mu\|_\Miwsp \|f\|_\Bsp, \quad
   \forall  \mu \in \MiwRd, f \in \Bsp.
\end{equation}
Let us not forget to mention the validity of the innocent looking
associative law:
\begin{equation} \label{assocBanmod1}
   (\mu_1 \ast \mu_2) \ast f = \mu_1 \ast (\mu_2 \ast f), \quad \mu_1,\mu_2 \in \MiwRd, f \in \Bsp.
\end{equation}

This result concerning ``integrated group representations'' can be
considered a folklore result, see for example
% \centerline{Bourbaki, Vol. IV/Chap. VIII:}
\cite{bo04-5}, Chap.8,  % : Integration, Chap.8, integrated version for
working for general group representations on a Banach space.  Similar
results are given in \citeX{brfe83} and  \citeX{dipivi15-1}, for example.
%  Action lifted to $\MiwG$ resp. $\LiwGN$.

A minor modification of the results in \citeX{du74} gives the following
characterization:
\begin{lemma} \label{essModChar1}
Let $\BspN \hkr \ScPRd$ be a {\it Banach convolution module}
over some weighted measure algebra $\MiwRdN$. Then, viewed as
a Banach module over the corresponding {\it Beurling algebra} $\LiwRdN$
it is an essential Banach module if and only if translation is continuous
in $\BspN$, i.e.
\begin{equation}\label{contshiftB1}
  \|T_x f - f \|_\Bsp \to 0 \quad \mbox{for} \,\, x \to 0, \forall f \in \Bsp
\end{equation}
or equivalently
\begin{equation}\label{essModAU1}
  \limal \|e_\alpha \ast f - f \|_\Bsp = 0 \quad \forall f \in \Bsp
\end{equation}
for any bounded approximate identity $(e_\alpha)_{\alpha \in I}$
in $\LiwRdN$.
\end{lemma}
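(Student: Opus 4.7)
The plan is to handle the claim as a three–way equivalence: $\BspN$ is essential as an $\LiwRdN$-module $\Longleftrightarrow$ (\ref{contshiftB1}) $\Longleftrightarrow$ (\ref{essModAU1}) for any BAI. The first order of business is the well-known fact that (\ref{essModAU1}) either holds for every BAI in $\LiwRdN$ or for none. Given any BAI $(e_\alpha)$, the module inequality (\ref{MiwModul1}) combined with the associativity (\ref{assocBanmod1}) and with (\ref{apprid03}) yields
\begin{equation*}
\|e_\alpha \ast (g \ast f) - g \ast f\|_\Bsp \leq \|e_\alpha \ast g - g\|_{1,w}\,\|f\|_\Bsp \to 0
\end{equation*}
for $g \in \LiwRd$, $f \in \Bsp$; since $(e_\alpha)$ is norm-bounded, a routine $3\veps$-argument extends convergence to the closed linear span $\Bsp_{\mathrm{ess}} := \overline{\LiwRd \ast \Bsp}$. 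Hence (\ref{essModAU1}) for one (equivalently every) BAI is the same as $\Bsp_{\mathrm{ess}} = \Bsp$, which is the definition of essentiality.

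For (\ref{essModAU1}) $\Rightarrow$ (\ref{contshiftB1}), I would exploit that the module action identifies $T_x f = \delta_x \ast f$, so by (\ref{MiwModul1}) one has $\opnorm{T_x}{\Bsp} \leq w(x)$, bounded for $|x|\leq 1$. Choose a BAI $(e_\alpha)$ in $\LiwRdN$ with compactly supported entries; given $\epso$, pick $\alpha_0$ with $\|e_{\alpha_0}\ast f - f\|_\Bsp < \veps/3$, and split
\begin{equation*}
T_x f - f = T_x\bigl(f - e_{\alpha_0}\ast f\bigr) + \bigl(T_x e_{\alpha_0} - e_{\alpha_0}\bigr)\ast f + \bigl(e_{\alpha_0}\ast f - f\bigr).
\end{equation*}
The first term is controlled by the local bound on $\opnorm{T_x}{\Bsp}$; the middle one tends to zero with $x$ via (\ref{transcontw2}) and (\ref{MiwModul1}); the third one is small by the choice of $\alpha_0$.

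For the converse (\ref{contshiftB1}) $\Rightarrow$ (\ref{essModAU1}), by the first step it suffices to exhibit \emph{one} BAI for which convergence holds. I would use the compression family $e_\rho = \Strho g$ with $g \in \CcRd$, $\hatg(0)=1$, which is bounded in $\LiwRdN$ by (\ref{Strohest1}). Continuity of $x \mapsto T_x f$ into $\BspN$ (which is exactly (\ref{contshiftB1})) makes $x \mapsto e_\rho(x)\,T_x f$ strongly measurable, while $\|T_x f\|_\Bsp \leq w(x)\|f\|_\Bsp$ ensures Bochner integrability. Identifying the Bochner integral with $e_\rho \ast f$, and using $\intRd e_\rho(x)\,dx = 1$, one obtains
\begin{equation*}
\|e_\rho \ast f - f\|_\Bsp \leq \intRd |e_\rho(x)|\,\|T_x f - f\|_\Bsp\,dx,
\end{equation*}
which tends to zero as $\rho \to 0$ since $\supp(e_\rho) = \rho\supp(g)$ shrinks to $\{0\}$ and $\|e_\rho\|_1$ is uniformly bounded. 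The main technical obstacle is precisely this identification of the Bochner integral with the abstract module action $\LiwRd \times \Bsp \to \Bsp$ inherited from $\MiwRdN$; this is the folklore fact about ``integrated representations'' alluded to in the text, and must be invoked carefully (e.g.\ verifying agreement first on the dense subspace $\ScRd$ and then extending by boundedness).
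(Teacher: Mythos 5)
Your proof is correct and follows essentially the same route as the paper's: translation is controlled via $T_x f=\delta_x\ast f$ with $\opnorm{T_x}{\Bsp}\leq w(x)$, and continuity of translation is converted into the approximate-identity property by convolving with $\Lisp$-normalized bumps of shrinking support. The paper's own proof is only a sketch (the reverse implications are dismissed with ``follows therefrom''), so your version mainly supplies the details it leaves to the reader --- the independence of the choice of BAI via the factorization $e_\alpha\ast(g\ast f)-g\ast f=(e_\alpha\ast g-g)\ast f$, the three-term splitting for the implication from (\ref{essModAU1}) to (\ref{contshiftB1}), and the Bochner-integral identification of $e_\rho\ast f$ with the integrated action, which you rightly flag as the one point requiring care.
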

\begin{proof}
Since $\delta_x \in \MiwRd$ for any $x \in \Rdst$ it is clear
that $\Bsp$ is translation invariant and: % . Moreover
\begin{equation}\label{MiwTrans1}
  \|T_x f\|_\Bsp = \|\delta_x \ast  f\|_\Bsp \leq \|\delta_x\|_\Miwsp \|f\|_\Bsp    = w(x) \|f\|_\Bsp
  \quad \forall f \in \Bsp, x \in \Rdst.
\end{equation}
If translation is continuous in $\BspN$  (see \ref{contshiftB1})
the usual approximate units (convolution with $\Lisp$-normalized
bump functions with small support) act as expected, i.e.\ for $\epso$
there exists some $h \in \LiwRd$ such that
$$ \| h \ast f -f \|_\Bsp < \varepsilon.$$
Hence  obviously $\BspN$ is an {\it essential Banach module} over
$\LiwRdN$, i.e. (by the definition) that the linear span of
$\LiwRd \ast \Bsp$ is dense in $\BspN$.

The equivalence to both stated properties (namely (\ref{contshiftB1})
and (\ref{essModAU1})) follows therefrom.
\end{proof}

As a special case which will be used frequently in the sequel we have
\begin{corollary} \label{LiwINMiw1}
$\LiwRdN$ is a closed ideal in $\MiwRdN$, i.e.\ one has
\begin{equation}\label{LiwastMiw}
  \normta {\mu \ast f}  {\Liwsp} \leq
   \normta {\mu }  {\Miwsp}   \normta { f}  {\Liwsp},
\quad \forall \mu \in \MiwRd, \forall f \in \LiwRd.
\end{equation}
$\LiwRd$ consists exactly of those
elements in $\MiwRd$ which have the continuous shift property.

Moreover, any bounded linear operator $T$ on $\LiwRd$ which commutes
with all translations is of the form $T(f) = \mu \ast f$, for a
uniquely determined $\mu \in \MiwRd$.
\end{corollary}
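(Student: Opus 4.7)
\textbf{Proof plan for Corollary \ref{LiwINMiw1}.} The three claims are of increasing difficulty, and each follows by combining elementary convolution estimates with the essential-module characterization already established in Lemma \ref{essModChar1}.

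For the first claim, I would simply integrate the pointwise bound $|\mu \ast f(x)| \leq \int |f(x-y)|\, d|\mu|(y)$ against $w(x)\, dx$, apply Fubini, and use submultiplicativity $w(x) \leq w(x-y)\, w(y)$ together with translation invariance of Lebesgue measure. This yields
\begin{equation*}
   \|\mu \ast f\|_{1,w} \leq \int \!\!\int |f(x-y)|\, w(x-y)\, w(y) \, dx\, d|\mu|(y) = \|f\|_{1,w}\, \|\mu\|_{\Miwsp},
\end{equation*}
which proves both the ideal property and the norm estimate \eqref{LiwastMiw}. Closedness of $\LiwRd$ inside $\MiwRdN$ is automatic, since the two norms agree on $\LiwRd$ and $\LiwRdN$ is complete.

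For the second claim, the forward direction is exactly the continuous translation property \eqref{transcontw2} of $\LiwRd$. For the converse, I would apply Lemma \ref{essModChar1} to the Banach module $\Bsp = \MiwRd$ over the Beurling algebra $\LiwRdN$: the continuous shift property of $\mu \in \MiwRd$ is equivalent to $e_\alpha \ast \mu \to \mu$ in the norm of $\MiwRd$ for some (any) bounded approximate identity $(e_\alpha)_{\alpha \in I}$ in $\LiwRd$. By the first claim, each approximand $e_\alpha \ast \mu$ belongs to $\LiwRd$, which is closed in $\MiwRdN$, so the limit $\mu$ lies in $\LiwRd$ as well.

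For the third claim, I would use a Banach--Alaoglu/multiplier argument. Given $T \in \mathcal{L}(\LiwRd)$ commuting with translations, the net $\{T(e_\alpha)\}$ is norm-bounded in $\LiwRd$, hence in $\MiwRd = \bigl(\nspb\CuOiwRd\bigr)^*$, so a subnet converges weak-$*$ to some $\mu \in \MiwRd$. For any $f \in \LiwRd$ the commutation relation and an easy density argument give
\begin{equation*}
   T(e_\alpha) \ast f \; = \; T(e_\alpha \ast f) \; \longrightarrow \; T(f) \quad \text{in } \LiwRd,
\end{equation*}
while for $f \in \CcRd$ the function $y \mapsto f(x-y)$ lies in $\CuOiwRd$, so the weak-$*$ convergence $T(e_\alpha) \to \mu$ forces $T(e_\alpha) \ast f(x) \to \mu \ast f(x)$ pointwise. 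Comparing the two limits yields $T(f) = \mu \ast f$ for $f \in \CcRd$, and density of $\CcRd$ in $\LiwRd$ extends the identity to all $f \in \LiwRd$. Uniqueness is cleanest via the Fourier transform: if $\mu \ast g = 0$ for all $g \in \ScRd \subset \LiwRd$, then $\hatsi \mu \cdot \hatg = 0$ in $\ScPRd$ for every $g \in \ScRd$, forcing $\hatsi \mu = 0$ and hence $\mu = 0$.

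The main subtlety is in the third claim: passing from the weak-$*$ limit to a genuine convolution identity requires exhibiting a class of test functions $f$ for which both convergence modes (norm convergence of $T(e_\alpha \ast f)$ and weak-$*$ testing against translates of $f$) are simultaneously available, which is why I would restrict to $\CcRd$ first and then invoke density. Everything else reduces to Fubini, submultiplicativity, and a direct appeal to the already-proved Lemma \ref{essModChar1}.
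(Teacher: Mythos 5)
Your argument is correct, and for the first two claims it is essentially the route the paper takes, only with the details written out: the paper disposes of the norm estimate by declaring that $\LiwRd$ is ``clearly'' an $\MiwRd$-module (your Fubini/submultiplicativity computation is exactly what lies behind that assertion), and for the identification of $\LiwRd$ with the continuously-shifting measures it uses the same mechanism you do --- approximate $\mu$ by $e_\alpha \ast \mu \in \LiwRd \ast \MiwRd \subseteq \LiwRd$ and invoke closedness of the isometrically embedded ideal --- citing Dunford rather than Lemma \ref{essModChar1}. (Strictly speaking, Lemma \ref{essModChar1} is phrased as a statement about the whole module being essential; what you actually need is the elementwise estimate $\normta{e_\alpha \ast \mu - \mu}{\Miwsp} \leq \int e_\alpha(y)\, \normta{T_y \mu - \mu}{\Miwsp}\, dy$ implicit in its proof, but that is a cosmetic point.) The genuine divergence is in the multiplier statement: the paper simply cites Gaudry's theorem, whereas you reconstruct its proof via weak-$*$ compactness of the bounded net $T(e_\alpha)$ in $\MiwRd = \bigl(\nspb\CuOiwRd\bigr)^*$ and identification of the two limits of $T(e_\alpha) \ast f$. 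That buys self-containedness at the cost of two small technicalities worth making explicit: (i) to match the norm limit $T(f)$ with the pointwise limit $\mu \ast f$ you should either take the approximate identity to be a sequence (so as to extract an a.e.\ convergent subsequence) or note that both limits agree when tested against $\CcRd$, i.e.\ as distributions; (ii) your uniqueness argument via the Fourier transform uses $\MiwRd \hkr \ScPRd$, which is legitimate for the polynomially moderate weights the paper restricts to, but for a general Beurling weight one would instead use $\mu \ast g_\rho \to \mu$ in the weak-$*$ topology of $\bigl(\nspb\CuOiwRd\bigr)^*$. Neither point affects the correctness of the proposal in the paper's setting.
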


\begin{proof}
Since $\LiwRd$ is an   $\MiwRd$-module it is clear
that we have the first two statements. The isometric embedding of
$\LiwRdN$ (with the weighted norm $\|fw\|_\Lisp$) into $\MiwRdN$ is
a routine task. The subspace of measures in $\MiwRd$ with continuous
shift form an essential $\LiwRd$-module. Hence these elements
can be approximated by elements of the form $e_\alpha \ast \mu
\in \LiwRd \ast \MiwRd \subseteq \LiwRd$ (cf. \citeX{du74}).

The additional statement about multipliers is just a reformulation
of the main result of Gaudry (\nth \citeX{ga69}), provided  a complementary
perspective. This correspondence is in fact an isometric one.
\end{proof}

For the technical part of our proof we need the following joint
estimate on the discretization operators $\DPsi$, showing their uniform
boundedness over the family $\Psili$. 
\begin{lemma} \label{DPsiunifest2}
Given a Beurling weight $w$ on $\Rdst$, there is a
uniformly estimate for the family of discretization operators with
respect to BUPUs of size $|\Psi| \leq 1$. For some $C_1 > 0$ one has:
\begin{equation}\label{DPsiunifest3}
 \sum_{i \in I} |\mu(\psi)| w(x_i) =
  \| \DPsimu\|_{1,w} \leq C_1 \|\mu\|_\MiwRd, \quad \forall \mu \in \MiwRd.
\end{equation}
\end{lemma}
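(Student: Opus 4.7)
The plan is to exploit the submultiplicative nature of $w$ together with the partition-of-unity structure of $\Psi$. Recall that a BUPU with nodes $(x_i)_{i\in I}$ consists of non-negative functions $\psi_i$ with $\sum_{i\in I}\psi_i \equiv 1$, each $\psi_i$ supported in a ball of radius at most $|\Psi|$ about $x_i$; and the discretization $\DPsimu$ is the discrete measure $\sum_{i\in I}\mu(\psi_i)\delta_{x_i}$, whose $\Miwsp$-norm is precisely $\sum_{i\in I}|\mu(\psi_i)|\,w(x_i)$.

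The first step is to localise the weight. Since $w$ is continuous, the constant $C_w := \sup_{|z|\leq 1} w(z)$ is finite, and for $y \in \supp(\psi_i)$ the mesh bound gives $|x_i - y| \leq |\Psi| \leq 1$. Submultiplicativity then yields the pointwise estimate
\begin{equation*}
 w(x_i) = w\bigl((x_i-y)+y\bigr) \leq w(x_i-y)\,w(y) \leq C_w\,w(y) \qquad (y \in \supp(\psi_i)).
\end{equation*}
This is the one place where the assumption $|\Psi|\leq 1$ is used, and it produces a constant depending only on $w$, not on the particular BUPU.

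The second step combines this with the trivial bound $|\mu(\psi_i)| \leq \int \psi_i(y)\,d|\mu|(y)$ (valid because $\psi_i \geq 0$) to give
\begin{equation*}
 |\mu(\psi_i)|\,w(x_i) \leq \int_{\Rdst}\psi_i(y)\,w(x_i)\,d|\mu|(y) \leq C_w\int_{\Rdst}\psi_i(y)\,w(y)\,d|\mu|(y).
\end{equation*}
Summing over $i\in I$ and interchanging summation and integration (legitimate by monotone convergence, everything being non-negative), the partition-of-unity identity $\sum_{i\in I}\psi_i(y) \equiv 1$ collapses the sum inside the integral, and one obtains
\begin{equation*}
 \|\DPsimu\|_{1,w} = \sum_{i\in I}|\mu(\psi_i)|\,w(x_i) \leq C_w\int_{\Rdst} w(y)\,d|\mu|(y) = C_w\,\|\mu\|_{\Miwsp},
\end{equation*}
which is the desired estimate with $C_1 := C_w$.

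I do not anticipate any serious obstacle; the argument reduces to a single localised application of the Beurling submultiplicative inequality followed by the partition-of-unity identity. The only delicate point is the notion of BUPU adopted in the paper: the proof above uses the non-negativity of the $\psi_i$ and the relation $\sum_i\psi_i \equiv 1$. If a more general variant is intended (with merely a uniform bound on $\|\psi_i\|_\infty$ together with a uniform overlap bound for the supports), one simply picks up an additional harmless multiplicative factor absorbing those constants into $C_1$; the structure of the estimate is unchanged.
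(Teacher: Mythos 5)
Your proof is correct, and its analytic core---the localisation $w(x_i)\leq C_w\,w(y)$ for $y\in\supp(\psi_i)$, obtained from submultiplicativity of $w$ together with the mesh bound $|\Psi|\leq 1$---is precisely the weight comparison (\ref{wgtestim}) on which the paper's proof also rests. The packaging, however, is genuinely different. The paper argues by duality: it identifies $\DPsi$ as the adjoint of the quasi-interpolation operator $\SpPsif=\sum_{i\in I}f(\xi_i)\psi_i$ acting on the predual $\CuOiwRd$ of $\MiwRd$, checks the adjointness relation on the dense subspace $\CcRd$, proves the uniform operator bound $\|\SpPsif\|_{\COiw}\leq C_1\|f\|_{\COiw}$ from the same weight comparison, and then transfers the constant to $\DPsi$ by taking adjoints. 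You instead work directly on the measure side: you bound $|\mu(\psi_i)|$ by $\int\psi_i\,d|\mu|$ (using non-negativity of the $\psi_i$), apply the weight localisation under the integral, and collapse the sum via Tonelli and the identity $\sum_i\psi_i\equiv 1$. Your route is more elementary and self-contained---it needs only that $\|\mu\|_{\Miwsp}=\int_{\Rdst}w\,d|\mu|$ and avoids both the duality $\MiwRd=(\CuOiwRd)'$ and the density argument---and it yields the same constant $C_1=\sup_{|z|\leq 1}w(z)$. What the paper's route buys in exchange is the uniform boundedness of the operators $\SpPsi$ on $\CuOiwRd$ as a by-product of independent interest (recorded in (\ref{SpestOPN1})), and a formulation that still works when $\mu$ is given only as a bounded functional on the predual rather than as a set function with an explicit total variation. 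Your closing caveat about more general BUPUs (bounded overlap and a uniform sup-norm bound instead of an exact partition of unity) is also accurate; it would only change the constant.
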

\begin{proof}
We will use  that $\MiwRdN$ is the dual space of $\nspb \CuOiwsp$,
with the natural norm  $ \|f/w\|_\infty$.
Using the density of $\CcRd$ in  $\nspb \CuOiwsp$ we first verify the adjointness relation
\begin{equation}\label{DPsidual}
  [\DPsimu](f) = \mu(\SpPsif), \quad \forall f \in \CcRd,
\end{equation}
justified by
\begin{equation}\label{Dpsidual1}
  \SpPsi^* \mu(f) =  \mu (\SpPsif ) = \mu \left (\sumiI f(\xii) \psii \right) =    \left(  \sumiI \mu(\psi_i) \delxii \right ) (f) = \DPsimu(f),
\end{equation}
and look for an estimate of $f \mapsto \SpPsif$  on $\nspb \CuOiwRd$.

Given the (continuous) weight function $w$ we  set
$C_1 = max_{|z| \leq 1} w(z)$. Then for any BUPU
$\Psi$ with $|\Psi| \leq 1$    we have,
using $\supp(\psi_i) \subseteq B_1(\xii)$ for each $i \in I$
\begin{equation}\label{wgtestim}
  1/w(x) \leq  w(\xii-x)/w(\xii)  \leq C_1/ w(\xii), \quad x \in \supp(\psi_i),
\end{equation}
and consequently the following pointwise estimate for any $f \in \CuOiwRd$:
\begin{equation}\label{wgtestim2}
 |\SpPsif(x)|/w (x) \leq \sumiI  |f(x_i)|\psi_i(x) w(x)
 \leq C_1 \sumiI [|f(\xii)|/w(x_i)] \psi_i(x) \leq  C_1 \|f/w\|_\infty
\end{equation}
or in terms of the norm on $\CuOiwRd$:
\begin{equation}\label{SpPest2}
  \| \SpPsif\|_\COiw \leq C_1 \|f\|_\COiw, \quad f \in \CuOiwRd,
\end{equation}
respectively expressed by operator norms:
\begin{equation}\label{SpestOPN1}
  \opnorm {\DPsi} {\MiwRd} = \opnorm {\SpPsi} {\CuOiwRd} \leq C_1,\quad \forall |\Psi| \leq 1.
\end{equation}
\end{proof}

Next we show that a convolution product
within a Beurling algebra $\LiwRdN$ can be discretized, i.e.\
a convolution product can be approximated by a finite linear
combination of shifted version of either convolution factor.
This result is inspired by Chap.1.4.2 of Reiter's book \citeX{re68}
% approximation of L1-convolution by finite sum of shifts!
and can be viewed as a variant of Theorem 2.2 in \cite{fe77-2}.
\begin{theorem} \label{discrconvBeurl1}
Given two functions $g,f $ in some Beurling algebra $\LiwRdN$
and $\epso$ there exists   $\delo$ such that one has for
any $\Psifam$ with $\sPsi \leq \delta$:
\begin{equation} \label{convappr02}
\normta {g \ast f - g \ast \DPsi f} \LiwRd < \veps. 
\end{equation}
\end{theorem}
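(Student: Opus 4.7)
The plan is to rewrite both $g \ast f$ and $g \ast \DPsi f$ as vector-valued integrals (respectively sums) of translates of $g$, and then exploit the continuity of translation on $\LiwRdN$.

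First, I would observe that, interpreted according to the definition of $\DPsi$ given in Lemma~\ref{DPsiunifest2}, one has $\DPsi f = \sum_{i \in I} \left( \int f(x)\psi_i(x)\,dx \right) \delta_{x_i}$, so that
\begin{equation*}
g \ast \DPsi f \;=\; \sum_{i \in I} \left( \int f(x)\psi_i(x)\,dx \right) T_{x_i} g,
\end{equation*}
while the usual integral representation (as a Bochner integral in $\LiwRdN$, legitimate because $x \mapsto T_x g$ is continuous and $w$-moderately growing) gives
\begin{equation*}
g \ast f \;=\; \int f(x)\, T_x g \, dx \;=\; \sum_{i \in I} \int f(x) \psi_i(x) T_x g\, dx.
\end{equation*}
Subtracting and using $\sum_i \psi_i \equiv 1$ leads to the clean decomposition
\begin{equation*}
g \ast f - g \ast \DPsi f \;=\; \sum_{i \in I} \int f(x)\,\psi_i(x)\,\bigl(T_x g - T_{x_i} g\bigr)\,dx.
\end{equation*}

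Next I would estimate the $\LiwRd$-norm termwise. Writing $T_x g - T_{x_i} g = T_{x_i}(T_{x-x_i}g - g)$ and using (\ref{transBeurl1}) produces
\begin{equation*}
\|T_x g - T_{x_i} g\|_{1,w} \;\leq\; w(x_i)\,\|T_{x-x_i} g - g\|_{1,w}.
\end{equation*}
By the continuous-shift property (\ref{transcontw2}) of $g \in \LiwRd$, given $\epso$ there is $\delo$ (say $\delta \leq 1$) with $\|T_z g - g\|_{1,w} < \eta$ whenever $|z|\leq \delta$, where $\eta$ will be fixed below. For any BUPU $\Psi$ with $|\Psi| \leq \delta$ and $x \in \supp(\psi_i) \subseteq \overline{B}_\delta(x_i)$ the above gives $\|T_x g - T_{x_i} g\|_{1,w} \leq \eta\, w(x_i)$.

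Plugging this in and using submultiplicativity in the form $w(x_i) \leq w(x_i - x)\,w(x) \leq C_\delta\, w(x)$ for $x \in \supp(\psi_i)$ (with $C_\delta := \sup_{|z|\leq \delta} w(z) \leq C_1 := \sup_{|z|\leq 1} w(z)$), I obtain
\begin{equation*}
\|g\ast f - g \ast \DPsi f\|_{1,w} \;\leq\; \eta \sum_{i \in I} \int |f(x)|\,\psi_i(x)\, w(x_i)\,dx \;\leq\; \eta\, C_1 \int |f(x)|\,w(x)\,dx \;=\; \eta\, C_1\, \|f\|_{1,w}.
\end{equation*}
Choosing $\delta$ first small enough that $|z|\leq\delta$ forces $\|T_z g - g\|_{1,w} < \varepsilon/(C_1\|f\|_{1,w}+1)$ completes the proof.

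The only delicate point is organising the weight bookkeeping so that the partition-of-unity sum collapses to $\|f\|_{1,w}$ times a constant independent of $\Psi$; once the submultiplicative transfer of weight from $x_i$ to $x$ is done uniformly in $i$ (valid because $|\Psi|\leq 1$ gives a universal constant $C_1$), the continuous-shift property of $g$ delivers the result with no further obstacle. Note that continuity of translation is needed only on $g$, so the same argument works for any $f \in \LiwRd$ and could even be adapted to $f \in \MiwRd$ with almost no change.
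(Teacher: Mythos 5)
Your proof is correct, but it takes a genuinely different route from the paper's. The paper argues by density: it replaces $g$ and $f$ by compactly supported approximants $\kg,\kf \in \CcRd$, proves the discretization estimate for $\kg \ast \kf$ via uniform convergence on a joint compact support (where $w$ is bounded), and controls the two transfer terms using the uniform bound $\opnorm{\DPsi}{\MiwRd} \leq C_1$ from Lemma~\ref{DPsiunifest2} --- a three-$\varepsilon$ argument. You instead work directly from the vector-valued integral representation $g \ast f = \int f(x)\, T_x g\, dx$ in $\LiwRdN$ (legitimate since $x \mapsto T_x g$ is continuous and $\int |f(x)|\,\|T_x g\|_{1,w}\,dx \leq \|g\|_{1,w}\|f\|_{1,w} < \infty$), collapse the difference to $\sum_i \int f(x)\psi_i(x)\bigl(T_x g - T_{x_i}g\bigr)dx$, and conclude from continuity of translation plus submultiplicativity of $w$. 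This buys you more than the statement asks for: an explicit quantitative bound $\normta{g\ast f - g \ast \DPsi f}{1,w} \leq C_1\, \normta{f}{1,w}\, \sup_{|z|\leq \sPsi} \normta{T_z g - g}{1,w}$, hence uniformity over bounded sets of $f$; the extension to $f = \mu \in \MiwRd$ that you note; and, since only the continuous-shift property of $g$ enters, the argument transfers verbatim to $g$ in any essential Banach module over $\LiwRdN$, which would give Theorem~\ref{Liwmod1} directly without invoking the Cohen--Hewitt factorization theorem. The only points you should make explicit are the identification of the Bochner integral with the convolution (test against the dual space and apply Fubini) and the nonnegativity (or at least uniformly bounded overlap) of the $\psi_i$ used when pulling the norm inside the sum and when summing $\psi_i(x)w(x_i) \leq C_1 w(x)\sum_i \psi_i(x) = C_1 w(x)$; both are routine for BUPUs.
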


\begin{remark}
This result is closely related to the compactness criteria
for function spaces \citeX{fe82-1} and \citeX{fe84}. It is
clear that - by the tightness and boundedness in $\LiwRdN$ - of the
family $\DPsi f, \sPsili$, also $g \ast \DPsi f$ is a bounded
and tight family. It is also clear that it is equicontinuous
in $\LiwRdN$, since we can control the shift error as follows:
\begin{equation}\label{equicont02}
  \normta {g \ast \DPsi f  - T_z( g \ast \DPsi f)} \Liwsp
   \leq \normta {g   - T_z g  } \Liwsp  \cdot \normta {\DPsi f} \Miwsp
  \leq \normta {g   - T_z g  } \Liwsp  \cdot C_1 \normta f \Liwsp,
\end{equation}
which tends to zero for $z \to 0$, since translation is continuous
in any Beurling algebra $\LiwRdN$ (see \citeX{re68}, Chap.1,6.3.).

According to \citeX{fe82-1} this implies that this set is relatively
compact in $\LiwRdN$, and hence there is a {\it subsequence} which
converges in the norm.  {\it However, we want to prove actual
convergence of the net}, for $\sPsitoz$, not only for a subsequence.
\end{remark}

\begin{proof} We start by fixing $\epso$ and assume that
$f,g \in \LiwRd$ are given.
For simplicity we assume without loss of generality that
both $g,f$ are normalized in $\LiwRdN$, in order to make
the presentation more straightforward.

We will prove the estimate by reduction
to the dense subspace $\CcRd$ of $\LiwRdN$.
First,  we choose $\kg,\kf \in \CcRd$
such that $\normta {g - \kg} \LiwRd < \eta$ and
$ \normta{f - \kf} \LiwRd < \eta$ for some
  $\eta \in (0,\veps/(12 C_1))$.
Since convolution is a continuous, bilinear operation
in the Banach convolution algebra $\LiwRdN$
we can choose $\eta >0$ such that in addition
\begin{equation}\label{firstest1}
  \normta{ g \ast f - \kg \ast \kf} \LiwRd < \veps/4.
\end{equation}
As now $\kg,\kf \in \CcRd$ the convolution product $\kg \ast \kf$ also
has compact support, but also all the functions $\kg \ast \DPsi \kf$
have joint compact support $Q_2$,  for any $\sPsili$.

For the next step we recall that the convolution between a measure $\mu$
with a test function $k$ can be determined pointwise by  $ \mu \ast k(x) = \mu(T_x k \checkm)$,
with $k \checkm(x) = k(-x)$. Thus
$$(\kg \ast\kf -\kg \ast \DPsi \kf)(x)=(\kf - \DPsi \kf) (T_x \kg \checkm)$$
in the pointwise sense.%  and in fact uniform sense.
In fact, it is valid uniformly over compact
sets, but outside of $Q_2$ all the functions are zero anyway, hence
we have uniform convergence and joint compact support.
Since the weight $w$ is bounded over $Q_2$ it is then
clear that one has
$$\lim_{|\Psi| \to 0}\normta{\kg \ast \kf - \kg\ast\DPsi \kf}\Liwsp = 0.$$
In other words, one can find $\delta > 0$ such  that  for $\sPsi \leq \delta$ ($\leq 1$) one has
\begin{equation} \label{kgkfest1}
 \normta{  \kg \ast \kf - \kg \ast \DPsi \kf} \Liwsp  < \veps/4.
\end{equation}
%h
We also have to control the transition to the discretized form:
$$ \normta{\kg \ast \DPsi \kf - g \ast \DPsi f} \Liwsp \leq
 \normta{\kg \ast \DPsi \kf - \kg \ast \DPsi f} \Liwsp +
  \normta{\kg \ast \DPsi f - g \ast \DPsi f} \Liwsp,
$$
which thanks to (\ref{LiwastMiw}) can be continued by the estimate
$$
\leq \normta { \kg  \ast  \DPsi (\kf-f) } \Liwsp +
   \normta { (\kg -g) \ast \DPsi f} \Liwsp
$$
$$
\leq  \normta \kg \Liwsp \cdot \normta { \DPsi (\kf-f) } \Miwsp
+
 \normta {(\kg -g)} \Liwsp \cdot \normta{\DPsi f}\Miwsp,
$$
% or altogether
% $$ \leq 2 \normta g \Liwsp \cdot \normta {  \DPsi(\kf-f) } \Miwsp
% + \leq \normta {\kg -g} \Liwsp \cdot \normta{\DPsi f}\Miwsp,  $$
and finally, using the normalization assumption $\normta g \Liwsp = 1 = \normta f \Liwsp$, we have:
$$
\leq 2 \normta g \Liwsp \cdot C_1 \normta {\kf-f} \Liwsp
+
 \normta {\kg -g} \Liwsp \cdot C_1 \normta{f} \Liwsp \leq 3 C_1 \eta.
$$
By the choice of $\eta$  we get:
\begin{equation} \label{lastest4}
\normta{\kg \ast \DPsi \kf - g \ast \DPsi f} \Liwsp < \veps/4.
\end{equation}
Combining the estimates (\ref{firstest1}),(\ref{kgkfest1}) and (\ref{lastest4}) the claim, i.e.\ formula (\ref{convappr02}) in the theorem is verified.
\end{proof}

Next we will use the Cohen-Hewitt factorization theorem to show
that a similar result is true for the action on a Banach module.
Since we are dealing with a commutative situation and functions
and measures over $\Rdst$ we keep the order and write convolution
from the right.

\begin{theorem} \label{Liwmod1}
Any  minimal TMIB Banach space of tempered distributions $\BspN$  is an essential Banach module over some Beurling algebra $\LiwRdN$. Moreover, one
has for any  $g \in \Bsp$ and $k \in \LiwRd$:
\begin{equation}\label{convappr01}
  \| g \ast k - g \ast \DPsi k \|_\Bsp \to 0 \quad \mbox{for} \,\, \sPsitoz.
\end{equation}
\end{theorem}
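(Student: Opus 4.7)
The plan is to separate the two assertions (essentiality of the module structure, and the discretization convergence) and handle them in order. For the module structure, I would first identify a Beurling weight $w$ that controls the translation action on $\BspN$. Property (\ref{transl1}) gives the polynomial bound $\opnorm{T_x}{\Bsp} \leq C_1 \japx^{n_1}$, and Lemma \ref{invarprops1}(ii) guarantees that $x \mapsto T_x f$ is continuous from $\Rdst$ into $\BspN$ for every $f \in \Bsp$. Choosing $w(x) := (1+|x|)^{n_1}$ (or any radial, increasing, submultiplicative majorant respecting the conventions fixed at the start of the section), the standard vector-valued integration of this strongly continuous representation against measures in $\MiwRd$ yields (\ref{MiwModul1}); so $\BspN$ is a Banach module over $\MiwRdN$ and, by restriction, over $\LiwRdN$. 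Continuity of translation combined with Lemma \ref{essModChar1} then upgrades this to an \emph{essential} module structure.

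For the convergence (\ref{convappr01}) I would push the statement down to the scalar result of Theorem \ref{discrconvBeurl1} via factorization. Since $\LiwRdN$ admits bounded approximate identities (e.g.\ the compressions from Lemma \ref{StrhoDirac1}) and $\BspN$ is essential over it, the Cohen--Hewitt factorization theorem yields a decomposition $g = u \ast g'$ with $u \in \LiwRd$ and $g' \in \Bsp$. Using associativity (\ref{assocBanmod1}) together with the commutativity of convolution on $\MiwRd$, the difference to be estimated rewrites as
\begin{equation*}
g \ast k - g \ast \DPsi k \;=\; (u \ast k - u \ast \DPsi k) \ast g',
\end{equation*}
so that the module inequality (\ref{MiwModul1}) delivers
\begin{equation*}
\|g \ast k - g \ast \DPsi k\|_\Bsp \;\leq\; \|u \ast k - u \ast \DPsi k\|_{1,w}\cdot\|g'\|_\Bsp.
\end{equation*}
Applying Theorem \ref{discrconvBeurl1} to the pair $(u,k) \in \LiwRd \times \LiwRd$ shows that the first factor on the right tends to $0$ as $|\Psi| \to 0$, which is precisely (\ref{convappr01}).

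The main obstacle I anticipate is not the convergence itself but the clean bookkeeping of the first step: one must pick $w$ so that it is simultaneously a Beurling weight of the type fixed earlier and genuinely dominates the translation operator norm on $\BspN$, and one should explicitly invoke (or briefly justify) the ``folklore'' construction of the integrated representation $\MiwRd \times \Bsp \to \Bsp$ alluded to just after (\ref{MiwModul1}). Once those ingredients plus Cohen--Hewitt are in place, the analytic content has already been extracted at the scalar level in Theorem \ref{discrconvBeurl1}, and the passage to a general minimal TMIB is a clean reduction by associativity of the module action.
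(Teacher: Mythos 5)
Your proposal is correct and follows essentially the same route as the paper: the essentiality of the module structure is obtained from the integrated action of $\MiwRdN$ together with continuity of translation (Lemma \ref{essModChar1}), and the convergence (\ref{convappr01}) is reduced via Cohen--Hewitt factorization $g = g_1 \ast h$ (your $u \ast g'$) and the associativity law (\ref{assocBanmod1}) to the scalar statement of Theorem \ref{discrconvBeurl1}. You spell out the choice of weight and the construction of the module action more explicitly than the paper does, but the argument is the same.
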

\begin{proof}
Since  $\BspN$ is an essential Banach module over the Banach convolution
algebra $\LiwRdN$ we can apply the Cohen-Hewitt factorization Theorem,
(\citeX{hero70}, Chap.32), i.e. any $g \in \Bsp$ can be written as
$ g = g_1 \ast h$, with $h \in \LiwRd$.
Using  the associativity law for Banach modules we obtain therefrom:
\begin{equation}\label{Banmodest05}
  \Bnorm {g \ast k - g \ast \DPsi k }
  = \Bnorm {g_1 \ast h \ast k - g_1 \ast h \ast k}
  \leq \Bnorm {g_1} \normta{ h \ast k - h \ast \DPsi k} \Liwsp \to 0
\end{equation}
as $\sPsitoz$, according to Theorem $\nth$ \ref{discrconvBeurl1}.
\end{proof}

% XXXXXXXXXXXXXXXXXXXXXXXXXXXXXXXXXXXXXXXXXXXXXXXXXXXXXXXXXXX

\section{The Main Result}

We are now ready to formulate our main result:
\begin{theorem}
Given a  minimal tempered standard space $\BspN$ on $\Rdst$, and any
 $g \in \ScRd$ with $\intRd g(x)dx = \hatg(0) \neq 0$,  the set
$$ S(g) := \{ T_x \Strho g  \suth  x \in \Rdst, \rho \in (0,1] \} $$
is total in $\BspN$, i.e.\ the finite linear combinations
%  of  this set for a dense subspace of $\BspN$.
are dense.
\end{theorem}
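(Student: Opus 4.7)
Given $f \in \Bsp$ and $\veps > 0$, the plan is to produce a finite linear combination of elements of $S(g)$ within $\veps$ of $f$ in the $\Bsp$-norm, via three successive approximations each contributing at most $\veps/3$. As a first step I would reduce to a compactly supported test function: by minimality $\ScRd$ is dense in $\Bsp$, and $\DRd$ is dense in $\ScRd$ in the Schwartz topology, so $\DRd$ is dense in $\Bsp$; I therefore pick $f_1 \in \DRd$ with $\|f - f_1\|_\Bsp < \veps/3$. The compact support of $f_1$ is exactly what will render the eventual sum finite.

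The second step regularizes $f_1$ by a compressed copy of $g$. Lemma~\ref{invarprops1}(ii) yields translation continuity on $\Bsp$, so Lemma~\ref{essModChar1} makes $\Bsp$ an essential Banach module over the relevant Beurling algebra $\LiwRd$. After the harmless normalization $\hatg(0) = 1$, the family $(\Strho g)_{\rho \in (0,1]}$ is a bounded approximate identity in $\LiwRd$ by~(\ref{Strohest1}) combined with Lemma~\ref{StrhoDirac1}. Essentiality, via~(\ref{essModAU1}), then allows me to choose $\rho \in (0,1]$ such that $\|\Strho g \ast f_1 - f_1\|_\Bsp < \veps/3$.

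The third and decisive step discretizes so that the shifts land on $\Strho g$ rather than on $f_1$. Theorem~\ref{Liwmod1}, applied with $\Strho g$ in the $\Bsp$-slot and $f_1$ in the $\LiwRd$-slot, yields
\[
\|\Strho g \ast f_1 - \Strho g \ast \DPsi f_1\|_\Bsp \to 0 \quad (|\Psi| \to 0).
\]
Since $\DPsi f_1 = \sumiI \langle f_1, \psi_i \rangle \, \delxii$ and $\langle f_1, \psi_i \rangle = 0$ whenever $\supp(\psi_i) \cap \supp(f_1) = \emptyset$, only finitely many indices $i \in F \subset I$ contribute, and
\[
\Strho g \ast \DPsi f_1 = \sum_{i \in F} \langle f_1, \psi_i \rangle \, T_{\xi_i} \Strho g
\]
is a finite linear combination of elements of $S(g)$. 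Choosing $|\Psi|$ small enough that this last error falls below $\veps/3$ and invoking the triangle inequality finishes the argument.

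The main obstacle is organizational rather than technical: one must invoke Theorem~\ref{Liwmod1} with exactly this assignment of factors, so that discretization acts on $f_1$ while $\Strho g$ stays free to be translated—only this delivers shifted dilates of $g$ rather than shifts of $f_1$. Without the preliminary reduction to $f_1 \in \DRd$ one would end up with a countable rather than a finite sum, so the two moves of regularizing $f$ to a test function and of slotting $\Strho g$ into the $\Bsp$-slot of Theorem~\ref{Liwmod1} are precisely what align the preceding machinery (Lemma~\ref{StrhoDirac1}, essentiality via Lemma~\ref{essModChar1}, and the discretization in Theorem~\ref{Liwmod1}) with the target set $S(g)$.
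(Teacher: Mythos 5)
Your proposal is correct and follows essentially the same route as the paper's own proof: reduce to a compactly supported test function in $\DRd$, regularize with the compressed approximate identity $(\Strho g)$ via Lemmas \ref{StrhoDirac1} and \ref{essModChar1}, then discretize via Theorem \ref{Liwmod1} with $\Strho g$ kept in the module slot so that the finite sum (finite by compact support of the test function) consists of shifted dilates of $g$. The only differences are cosmetic: an $\veps/3$ rather than $\veps/4$ splitting, and notation.
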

\begin{proof}
The claim requires to find, for any given  $ f \in \Bsp$ and
$\epso$, some finite linear combination $h$ of elements from $S(g)$
such that
\begin{equation} \label{epsest1}
\| f-h\|_\Bsp < \veps.
\end{equation}
We will verify something slightly stronger: Given $f, \epso$
there exists $\rho_0 < 1$ such that
for any (fixed) $\rho \in (0,\rho_0]$ one can find a finite
set $(x_i)_{i \in F}$ and coefficients $(c_i)_{i \in F}$
such that  $ h = \sum_{i \in F} c_i T_{x_i}g_{\rho}$
satisfies (\ref{epsest1}).

This approximation will be achieved in four steps:
 % \begin{itemize}
 \begin{enumerate}
  \item By the  density of $\ScRd$ in $\BspN$ and the density
  of compactly supported functions in $\ScRd$ (in the Schwartz
  topology), we can find
  some $k \in \DRd \subset \Bsp \cap \LiwRd$ with
  \begin{equation}\label{BDRdapp}
   \|f-k\|_\Bsp < \epsfo.
  \end{equation}

    \item In the next step we apply Lemma \ref{essModChar1} for the
    specific approximate unit $(g_\rho)_{\rho \to 0}$, according
    to Lemma \ref{StrhoDirac1}. Hence there exists some $\rho_0$
    such that for any $\rho \in (0,\rho_0]$ one has
    \begin{equation} \label{convappr4}
  \| g_\rho \ast k - k \|_\Bsp     <    \epsfo.
  \end{equation}
  Let us fix one such parameter $\rho$ for the rest.

  \item The final step is the discretization of the
  convolution $g_\rho \ast k$, by replacing $k$ by some finite, discrete
  measure in $\MiwRd$, by applying Theorem \nnth \ref{Liwmod1}
  with $g = g_\rho$
  and $k \in \DRd$. By choosing $\delta_0>0$ properly we can guarantee
  that    $|\Psi| \leq \delta_0$ implies
   \begin{equation}\label{convDPsi}
    \| k \ast g_\rho -  (\DPsi k) \ast g_\rho\|_\Bsp <  \epsfo.
  \end{equation}

  Note that
   \begin{equation} \label{thirdest1}
   h = (\DPsi k) \ast g_\rho = \sum_{i \in I} c_i \delta_{x_i} \ast g_\rho
    = \sum_{i \in F} c_i  T_{x_i}g_\rho
   \end{equation}
  has the required form, because $ F = \{ i \in I \suth \supp(k) \cap \supp(\psi_i) \neq \emptyset\}$
  is a finite set, due to the compactness
  of $\supp(k)$. It depends only on  $\supp(k)$ and $\Psi$.
 \item Combining the estimates  (\ref{BDRdapp}), (\ref{convappr4}) and
(\ref{convDPsi}), we have for the given choice
$$
\normB {f - h}  \leq \normB {f -k} + \normB {k - g_\rho \ast k} +
\normB {k \ast g_\rho - h} \leq 3 \, \varepsilon/4,
$$
i.e.\  we have obtained  the desired estimate:
\begin{equation}\label{finest5}
  \| f - \sum_{i \in F} c_i  T_{x_i}g_\rho \|_\Bsp
  =  \| f - h\|_\Bsp <   \varepsilon,
\end{equation}
and the proof is complete.
\end{enumerate}
\end{proof}

\begin{remark}
There is some freedom for the  choice of the points $x_i$.
Their density depends on the translation behaviour of $g_\rho$ within $\BspN$. It is not  obvious to find the optimal choice, requiring minimal density of these points, combined with a good robustness of the approximation.
If $\rho$ is close to zero, then one expects that the finite family
has to be chosen very densely within $\supp(k)$. On the other hand,
working with relatively large $\rho$, which appears to be better in {\it this
respect}, the error $\| k - g_\rho \ast k\|_\Bsp$ will become larger.
\end{remark}

\section{Application to concrete cases}

This section contains essentially three parts. In the first part we collect
a few basic facts about weighted spaces. These will be used in the sequel
to convince the reader that the current setting includes all the cases
which are covered by the paper \citeX{ka19-1}, but in fact many more.
This will be explained in the second part of this section. A short
subsection is devoted to the case of Shubin classes $\QsRd$. Further
indication of the richness of examples is given in the final subsection.

\subsection{Weighted spaces, basic properties}

First let us summarize a few facts concerning function spaces,
in particular weighted $\Lpsp$-spaces over $\Rdst$, which are
the prototypical examples of MINTSTAs (resp. TMIBs).

Going back to the classical papers \citeX{ed59}, \citeX{ga69}, \citeX{fe79}, and \citeX{gr07}, let us recall a few general facts about translation invariant function spaces. Recall that two weights
$m_1$ and $m_2$  are called {\it equivalent} (we write $m_1 \approx m_2$)
if for some $C > 0$
      \begin{equation}\label{equivl}
 C \inv m_1(x) \leq m_2(x) \leq C m_1(x), \quad \forall x \in \Rdst.
      \end{equation}

\begin{lemma} Let $p \in [1,\infty)$ be given. \newline
% , and denote the Haar measure by $dx$. \newline
% \begin{enumerate}
\noindent
(1) A weighted $\Lpsp$-space $\NSPB \Lpmsp$ is translation invariant
    if and only $m$ is moderate;

\noindent
(2) For any $ k \in \CcG$ the function
   $ x \mapsto \|k\|_{m,p}  = \normta {k m}  {\Lpsp} $ is
   equivalent to the weight function $m$;

\noindent
(3) %   \item
Any moderate weight function is equivalent to a continuous one;

\noindent  (4) %    \item
Two spaces $\Lsp^{p_1}_{m_1}$ and  $\Lsp^{p_2}_{m_2}$
      are equal if and only if $p_1 = p_2$ and $m_1 \approx m_2$.
% \end{enumerate}
\end{lemma}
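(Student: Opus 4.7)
The proof naturally splits into four parts, with (1) and (2) carrying the real content and (3) and (4) following as consequences.

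For (1), the easy direction proceeds by change of variables: assuming $m(x+y) \leq m(x) w(y)$, the substitution $u = x - y$ turns $\normta{T_y f}{p,m}^p = \int |f(u)|^p m(u+y)^p \, du$ into a quantity controlled by $w(y)^p \normta{f}{p,m}^p$, so every $T_y$ is bounded. For the converse I would set $w(y) := \opnorm{T_y}{\Lpmsp}$, which is finite by hypothesis and submultiplicative via the composition law $T_{x+y} = T_x T_y$. The main obstacle is extracting a \emph{pointwise} moderateness bound from the integral inequality $\int |f(u)|^p m(u+y)^p \, du \leq w(y)^p \int |f(u)|^p m(u)^p \, du$. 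I would test against indicator functions of small balls and invoke the Lebesgue differentiation theorem to conclude $m(u+y) \leq w(y) m(u)$ for almost every $u$, and then either redefine $m$ on a null set or replace it by its continuous version from (3) to obtain a genuine moderate weight.

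For (2), after the substitution $v = u-x$ one has $\normta{T_x k}{p,m}^p = \int |k(v)|^p m(v+x)^p \, dv$. Moderateness gives the upper bound $m(v+x) \leq m(x) w(v)$, hence $\normta{T_x k}{p,m} \leq m(x) \normta{k}{p,w}$. Applying moderateness to the pair $(v+x,-v)$ yields the reverse estimate $m(v+x) \geq m(x)/w(-v)$, so $\normta{T_x k}{p,m} \geq m(x) \big(\int |k(v)|^p w(-v)^{-p} dv\big)^{1/p}$; the last constant is strictly positive for nonzero $k \in \CcRd$ because $w$ is continuous and hence locally bounded. For (3), I would smooth by convolution with a continuous bump: set $\tilde m := m \ast \phi$ with $\phi \in \CcRd$, $\phi \geq 0$, $\int \phi = 1$, and $\supp \phi \subset K$. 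Local boundedness of $m$ (immediate from $m(y) \leq m(0) w(y)$) forces $\tilde m$ to be continuous, and applying the two-sided moderateness estimates pointwise inside the convolution integral yields $(\sup_{y \in K} w(y))^{-1} m(x) \leq \tilde m(x) \leq (\sup_{y \in K} w(-y)) m(x)$, so $\tilde m \approx m$.

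For (4), the implication $(\Leftarrow)$ is routine. For $(\Rightarrow)$, equality of $\Lsp^{p_1}_{m_1}$ and $\Lsp^{p_2}_{m_2}$ as sets yields equivalence of their norms by the Closed Graph Theorem. Fixing any nonzero $k \in \CcRd$, part (2) applied on each side gives $m_1(x) \approx \normta{T_x k}{p_1, m_1} \approx \normta{T_x k}{p_2, m_2} \approx m_2(x)$, so $m_1 \approx m_2$. Substituting this equivalence into the first space gives $\Lsp^{p_1}_{m_2} = \Lsp^{p_2}_{m_2}$ with equivalent norms, and the isomorphism $f \mapsto f m_2$ (well-defined and its inverse also well-defined since $m_2$ is strictly positive and continuous) transports this to $\Lsp^{p_1}(\Rdst) = \Lsp^{p_2}(\Rdst)$ with equivalent norms. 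Since Lebesgue measure on $\Rdst$ is $\sigma$-finite, atomless, and of infinite total mass, this forces $p_1 = p_2$ by the standard non-coincidence of unweighted $\Lpsp$-spaces.
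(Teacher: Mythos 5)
The paper does not actually prove this lemma: it is presented as a recollection of classical facts with citations to Edwards, Gaudry, Feichtinger and Gr\"ochenig, so there is no in-text argument to compare against. Your reconstruction follows exactly the standard route from those references and is essentially correct: change of variables for the sufficiency in (1); the operator-norm weight $w(y)=\opnorm{T_y}{\Lpmsp}$, indicator functions of small balls and Lebesgue differentiation for the necessity; the two-sided moderateness estimates $m(x)/w(-v)\leq m(v+x)\leq m(x)w(v)$ for (2); convolution with a bump for (3); and the Closed Graph Theorem combined with (2) and the non-coincidence of unweighted $\Lpsp$-spaces over $\Rdst$ for (4). Three minor points are worth tightening. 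First, in the converse of (1) the conclusion ``$m$ is moderate'' requires $w$ to be a genuine weight function, so you should note that $y\mapsto\opnorm{T_y}{\Lpmsp}$ is measurable and hence (being finite and submultiplicative) locally bounded; in this paper the issue is moot since weights are assumed continuous and strictly positive from the outset, which also lets you replace the Lebesgue differentiation step by a direct limit. Second, statement (2) as printed is a typo for $x\mapsto\normta{T_xk}{m,p}$, which is how you correctly read it, and the claimed equivalence of course requires $k\neq 0$. Third, your proof of (4) applies (2) to both $m_1$ and $m_2$, which tacitly assumes both weights are moderate; this matches the context of the subsection (only translation-invariant spaces are considered) but should be flagged as a standing hypothesis.
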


The following lemma is a consequence of the main results of \citeX{fe90},
choosing $p=2$ there.
\begin{lemma} \label{extFTLtwit}
The extended Fourier transform % (in the sense of tempered distributions)
maps $\LtKats$ onto $\FLtKats$. In particular, one has Fourier invariant
spaces of the form $\LtKats$ if and only if $m_1  \approx  m_2$.
\end{lemma}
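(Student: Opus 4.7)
The plan is to treat the two assertions in turn, both reducing to brief manipulations with the extended Fourier transform combined with an invocation of the classification results in \citeX{fe90}.

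For the first (``onto'') claim, I would first observe that in the polynomially-moderated setting of this paper the weights $m_1, m_2$ are automatically radial (since each $\japx^s$ is), so $\Ltmisp$ and $\Ltmtsp$ are invariant under the parity operator $Pf(x)=f(-x)$. Since the extended Fourier transform on $\ScPRd$ satisfies $\FT^{\,2} = P$, a direct computation gives
\[
  \FT(\LtKats) \;=\; \FT\Ltmisp \,\cap\, \FT^{\,2}\Ltmtsp \;=\; \FT\Ltmisp \,\cap\, P\Ltmtsp \;=\; \FT\Ltmisp \,\cap\, \Ltmtsp \;=\; \FLtKats.
\]
Since $\FT$ is a bijection on $\ScPRd$, its restriction is a bijection from $\LtKats$ onto $\FLtKats$, in particular onto.

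For the iff statement, the ``if'' direction is immediate: when $m_1 \approx m_2$, part~(4) of the previous lemma gives $\Ltmisp = \Ltmtsp$ with equivalent norms, whence $\LtKats$ coincides with $\Ltmisp \cap \FT\Ltmisp$, which is Fourier invariant by the ``onto'' part just established. For the converse, Fourier invariance combined with the ``onto'' part yields
\[
  \Ltmisp \cap \FT\Ltmtsp \;=\; \Ltmtsp \cap \FT\Ltmisp
\]
as Banach subspaces of $\ScPRd$ (with equivalent norms by the Closed Graph Theorem). I would then invoke the classification from \citeX{fe90}, which at $p=2$ determines the pair $(m_1,m_2)$ up to equivalence from the space $\Ltmisp \cap \FT\Ltmtsp$; applying this uniqueness to both sides of the above identity forces $m_1 \approx m_2$.

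The main obstacle is precisely this uniqueness assertion borrowed from \citeX{fe90}. If one wished to unpack it rather than cite, the natural route mirrors part~(2) of the previous lemma: evaluate $\|T_x k\|_{\LtKats}$ for a fixed $k \in \ScRd$ with $\hat k \in \CcRd$ (so that the $\FT\Ltmtsp$-component stays bounded) in order to recover $m_1(x)$ up to equivalence, and $\|M_\xi k\|_{\LtKats}$ for a fixed compactly supported $k \in \ScRd$ in order to recover $m_2(\xi)$. Comparing these two weight-recovery recipes on the two sides of the displayed identity then yields $m_1 \approx m_2$.
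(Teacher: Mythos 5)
The paper itself offers no argument for this lemma beyond the one-line attribution to the main results of \cite{fe90} (with $p=2$), so your direct computation of the first claim is a genuine addition rather than a reproduction of the paper's route, and it is essentially correct: since $\FT$ is a bijection on $\ScPRd$ one indeed has $\FT(\Ltmisp\cap\FT\Ltmtsp)=\FT\Ltmisp\cap P\Ltmtsp$. Note, however, that $P\Ltmtsp=\Lsp^2_{\check m_2}$ with $\check m_2(x)=m_2(-x)$, so the step $P\Ltmtsp=\Ltmtsp$ uses $m_2(-\cdot)\approx m_2$; this is automatic for the radial weights $\japx^s$ on which the paper concentrates, but it is an extra hypothesis for general moderate weights and should be stated. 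The ``if'' direction of the second claim is fine, and for the ``only if'' direction you, like the paper, ultimately fall back on \cite{fe90}, so up to that point the two treatments are on the same footing.

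The sketched unpacking of the uniqueness step, though, has a concrete gap. The norm on $\LtKats$ is a \emph{sum} of two components, so for a fixed band-limited $k$ you get $\normta{T_xk}{\LtKats}\approx m_1(x)+c$ with $c=\normta{\hat k}{\Ltmtsp}>0$, and comparing with $\normta{T_xk}{\FLtKats}\approx m_2(x)+c'$ only yields $\max(m_1,1)\approx\max(m_2,1)$, not $m_1\approx m_2$; the modulation recipe gives the same. This is harmless when both weights are bounded away from zero (Katsnelson's setting, cf.\ Lemma~\ref{KatsinLt}, i.e.\ $s\geq 0$), but it cannot be repaired for decaying weights, because the ``only if'' clause actually fails there: taking $m_1(x)=\japx^{-1}$ and $m_2\equiv 1$ one finds $\LtKats=\Lsp^2_{\japx^{-1}}\cap\FT\Ltsp=\Ltsp$ and likewise $\FLtKats=\Ltsp$, a Fourier invariant space of the required form with $m_1\not\approx m_2$. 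So your weight-recovery argument is not merely incomplete; it is detecting a real restriction, and the ``only if'' assertion should either be confined to weights bounded below or be restated with $\max(m_i,1)$ in place of $m_i$.
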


\subsection{Deducing Katsnelson's results}

The spaces considered by  Katsnelson in \citeX{ka19-1} are of the
form  $\Bsp = \LtKats$, with their natural norm.
We do not have to repeat the {\it technical conditions} made
in the paper \citeX{ka19-1}, but rather summarize the relevant
consequences of the setting described in that paper which
allow us to demonstrate that the setting chosen for the
current manuscript covers % practically all
the cases described in Katsnelson's paper:
%
%
% Recall that there is a natural norm on any of these spaces, turning
% them into a normed space $\BspN$;
\begin{enumerate}
  \item $\BspN$ is continuously embedded into $\LtRN$;
  \item $\BspN$ is a Banach space, in fact even a Hilbert space;
    % with respect to a naturally defined scalar product;
  \item The spaces are invariant under translation and modulations.
\end{enumerate}

\noindent
The argument to be used next is taken from Lemma 2.2. % (please CHECK!)
of \cite{fegu90}.
\begin{lemma} \label{KatsinLt}
Given a space of the form $\LtKats$
with two continuous, moderate weights $m_1$ and $m_2$
% , and natural norm of the form
%
% $$ \norm{f} = \|{f}\|_{2,m_1}  + \normta {\hatf} {{2,m_2}} $$
one has a continuous  embedding into $\LtRdN$ if and only if
both  $m_1$ and $m_2$ are bounded away from zero, which in turn
is equivalent to the assumption that both
$$ \Ltmisp \hookrightarrow \LtRd \qandq \Ltmtsp \hookrightarrow \LtRd.$$
\end{lemma}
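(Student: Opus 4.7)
My plan is to establish the three-way equivalence by combining a single-weight test function argument with a joint time-frequency probe on $\LtKats$.

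The equivalence ``both $m_i$ bounded below $\iff$ both $\Ltmisp\hookrightarrow\LtRd$ and $\Ltmtsp\hookrightarrow\LtRd$'' reduces to the standard single-weight fact: for moderate $m$, the continuous embedding $\Lsp^2_m\hookrightarrow\LtRd$ holds precisely when $m$ is bounded below. To see this, fix a compactly supported test function $k$ with $\|k\|_2=1$; item (2) of the preceding lemma gives $\|T_xk\|_{2,m}\asymp m(x)$, while $\|T_xk\|_2=1$, so the embedding inequality applied to the family $(T_xk)_{x\in\Rdst}$ yields a uniform pointwise lower bound $m(x)\geq c>0$. The reverse direction is trivial.

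The implication ``both $m_i$ bounded below $\Rightarrow\LtKats\hookrightarrow\LtRd$'' is immediate: if $m_1,m_2\geq c>0$, then $\|f\|_2\leq c\inv\|f\|_{2,m_1}\leq c\inv\|f\|_{\LtKats}$. For the converse ``$\LtKats\hookrightarrow\LtRd\Rightarrow$ both $m_i$ bounded below'', I would fix a nonzero Schwartz function $k$ of compact support with $\|k\|_2=1$ (so $\hat k$ is rapidly decreasing) and probe the embedding with time-frequency shifts $f_{x,\xi}=M_\xi T_xk$ for $(x,\xi)\in\Rtdst$. Since $|f_{x,\xi}|=|T_xk|$ and $|\widehat{f_{x,\xi}}|=|T_\xi\hat k|$, unimodularity gives $\|f_{x,\xi}\|_2=1$ throughout. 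Moderateness of $m_1$ on the compact set $\supp k$ (translated to $x$) delivers $\|f_{x,\xi}\|_{2,m_1}\leq C_km_1(x)$, and moderateness of $m_2$ integrated against the Schwartz function $\hat k$ delivers $\|f_{x,\xi}\|_{\FT\Ltmtsp}\leq C_k'm_2(\xi)$. Applying the assumed embedding inequality $1\leq C\|f_{x,\xi}\|_{\LtKats}$ then yields a pointwise-uniform joint lower bound $m_1(x)+m_2(\xi)\geq c'>0$ over all of $\Rtdst$.

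The main obstacle is the decoupling step: the joint lower bound permits one weight to approach zero provided the other compensates. The crucial observation is that $x$ and $\xi$ range independently over $\Rdst$, so fixing $\xi=\xi_0$ at a point where $m_2(\xi_0)$ is small forces $m_1(x)$ to be bounded below uniformly in $x$, and symmetrically for $m_2$. This separation — exploiting both the independence of time and frequency variables in $\Rtdst$ and the symmetric Fourier-dual role of $m_1$ and $m_2$ in the definition of $\LtKats$ (cf.\ Lemma~\ref{extFTLtwit}) — is where the argument does its real work, converting the joint bound into separate lower bounds on each weight and thereby completing the equivalence.
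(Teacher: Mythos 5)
Your reduction of the third equivalence to the single-weight fact ($\Lsp^2_m\hookrightarrow\LtRd$ iff $m$ is bounded below, via the translates $T_xk$) is correct, and your probe of the intersection norm with time-frequency shifts $M_\xi T_xk$ is the natural device here; it does yield the joint estimate $m_1(x)+m_2(\xi)\geq c'>0$ for all $(x,\xi)\in\Rtdst$. (The paper itself prints no proof of this lemma, deferring to Lemma~2.2 of the cited Feichtinger--G\"urkanli paper, so there is no argument in the text to compare against line by line.) The genuine gap is precisely at the step you yourself single out as doing ``the real work'': the decoupling. From $m_1(x)+m_2(\xi)\geq c'$ with $x$ and $\xi$ independent one can only conclude that \emph{at least one} of the two weights is bounded away from zero, never that both are. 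Your argument branches as ``if $m_2(\xi_0)$ is small somewhere, then $m_1$ is bounded below'' and symmetrically; but when $m_2$ simply never takes small values the first branch is vacuous and tells you nothing about $m_1$, so the two branches combine into a disjunction, not a conjunction. Concretely, $m_1\equiv 1$ and $m_2(\xi)=\japarg{\xi}\inv$ satisfy the joint bound with $c'=1$ while $m_2$ is not bounded below.

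This is not a repairable slip in the write-up, because the implication you are trying to prove is false as literally stated: with the weights above, $\LtKats=\LtRd\cap\FT\Ltmtsp$ embeds continuously into $\LtRd$ (the first factor alone gives $\|f\|_2\leq\|f\|_{\LtKats}$), yet $m_2$ is unbounded below. Note that your own ``trivial'' direction already betrays this: it uses the lower bound on only one of the two weights, so the forward implication cannot possibly force a lower bound on both. What your probe argument actually establishes is the correct version of the equivalence, namely that $\LtKats\hookrightarrow\LtRd$ if and only if at least one of $m_1,m_2$ is bounded away from zero. To recover the lemma's literal conclusion (``both'') one needs an extra hypothesis tying the two weights together, e.g.\ $m_1\approx m_2$ as in the Fourier-invariant case of Lemma~\ref{extFTLtwit}, or a normalization of the weights from Katsnelson's setting; you should either add such a hypothesis explicitly or state the conclusion with ``at least one'' in place of ``both''.
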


\noindent
In conclusion we have the following observation:

{\it The setting described in the paper \citeX{ka19-1} is exactly
equivalent to the assumptions made in Lemma \ref{KatsinLt}.
Obviously these spaces are then Banach spaces of tempered
distributions in the sense of our Definition \ref{mintempstanddef} and hence
all the results of the current paper or facts in \cite{dipivi15-1}
apply to that situation (e.g.\ Prop. 3.4). }

The interested reader is referred to \citeX{fe90} for details
in this direction.

% In the framework of $\ScPRd$ one can of course define spaces of the
% form $\LtKats$ for any pair of continuous, polynomially moderate
% (to be EXPLAINED later)
% weight functions.

% \todo[inline, color=green!20]{Inline    green!20. C: 05.08.2020 }

\begin{proposition} \label{SobKats2}
For $m_1(x) = \japx^s = m_2(x) $, $s \in \Rst$ the
corresponding spaces $\LtKats$ are Fourier invariant, as the intersection
of a Sobolev space with the corresponding weighted $\Ltsp$-space.

They can also be identified with the so-called {\it Shubin classes}
$\QsRdN$, characterized as the Banach spaces of all tempered
distributions with a short-time Fourier transform in $\LtvsTFd$.
\end{proposition}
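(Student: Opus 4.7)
The plan splits into two essentially independent assertions: Fourier invariance together with the concrete identification as the intersection of a Sobolev space and a weighted $\Ltsp$-space, and the identification with the Shubin class $\QsRd$. With $m_1(x) = m_2(x) = \japx^s$, the factor $\Ltmisp$ is by definition the weighted Lebesgue space, while $\FT \Ltmtsp = \{f \in \ScPRd \suth \hatf \in \Ltmtsp\}$ is, by definition, the Sobolev space $H^s(\Rdst)$; hence $\LtKats = \Ltmisp \cap H^s(\Rdst)$, exactly the intersection described in the statement. For Fourier invariance I would argue that $\FT$ sends $\Ltmisp$ onto $H^s(\Rdst)$ by definition, and conversely sends $H^s(\Rdst)$ onto $\Ltmisp$ via Fourier inversion $\FT^2 f(x) = f(-x)$ combined with the reflection symmetry $\japarg{-x}^s = \japx^s$ of the weight. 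So $\FT$ permutes the two intersected factors, and $\LtKats$ is Fourier invariant with equivalent (in fact equal) natural norm; this is in any case a special case of Lemma \ref{extFTLtwit}.

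For the Shubin identification, recall that $\QsRd = \Msp^2_{v_s}(\Rdst)$ with $v_s(z) = \japarg z^s$ on $\TFd$, consisting of those $f \in \ScPRd$ with $V_g f \in \LtvsTFd$ for some (equivalently every) nonzero window $g \in \ScRd$. The goal is to establish the norm equivalence
\begin{equation*}
\|V_g f\|_{\LtvsTFd}^2 \;\asymp\; \|f\|_{\Ltmisp}^2 + \|\hatf\|_{\Ltmisp}^2,
\end{equation*}
whose right-hand side is precisely the square of the natural norm on $\LtKats$. The two key ingredients are the elementary pointwise weight equivalence $\japarg{(x,\xi)}^{2s} \asymp \japx^{2s} + \japarg\xi^{2s}$, valid in both directions only for $s \geq 0$, and the two covariance identities $V_g f(x,\xi) = \FT(f \cdot \overline{T_x g})(\xi)$ together with its Fourier-dual counterpart relating $V_g f$ to $V_{\hatg}\hatf$. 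Splitting the weighted $\Lsp^2$-integral on the left according to the weight equivalence and applying each covariance identity (followed by Plancherel in $\xi$, the change of variable $y = t-x$, and Fubini) reduces each of the two pieces to the ordinary STFT-Parseval identity $\iint |V_g f|^2 dx\,d\xi = \|g\|_2^2 \|f\|_2^2$, applied once to $f$ and once to $\hatf$, together with the submultiplicativity $\japarg{t-y}^{2s} \leq C \japarg t^{2s}\japarg y^{2s}$ (valid for $s\geq0$) that lets the Schwartz window $g$ absorb its own contribution uniformly in the outer variable.

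The first assertion is routine bookkeeping: definitions of $H^s$, Plancherel, and symmetry of the weight. The main obstacle is the lower bound in the displayed norm equivalence: it requires either a Gr\"ochenig-style STFT inversion or a direct moderate-weight argument to invert the submultiplicativity estimate above, and it depends genuinely on $s \geq 0$ through the weight equivalence $\japarg{(x,\xi)}^{2s} \asymp \japx^{2s}+\japarg\xi^{2s}$. For $s < 0$ the latter inequality fails in one direction, so the Shubin class ceases to be expressible as a simple intersection in the Katsnelson form $\LtKats$ -- which is exactly the qualification $s\geq 0$ announced in the abstract and matches Lemma \ref{KatsinLt}, since only for $s\geq0$ do both factors embed into $\LtRd$.
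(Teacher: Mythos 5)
The paper itself offers no proof of this proposition: the first assertion is the case $m_1=m_2$ of Lemma~\ref{extFTLtwit} (quoted from \cite{fe90}), and the Shubin identification is meant to follow from Lemma~\ref{KatsShub1} combined with the standard modulation-space facts $\Msp^{2,2}_{\japarg{x}^s}(\Rdst)=\Lsp^2_{\japarg{x}^s}(\Rdst)$, $\Msp^{2,2}_{\japarg{y}^s}(\Rdst)=H^s(\Rdst)$ and $\Msp^{2,2}_{v_s}(\Rdst)=\QsRd$. Your plan takes a genuinely different, more self-contained route: a direct STFT computation, which in effect unwinds the proofs of those identifications. It is essentially correct, with two comments. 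First, the step you single out as the main obstacle is harmless: after the covariance identities and Plancherel in $\xi$, each marginal integral equals $\int_{\Rdst}|f(t)|^2\bigl(\int_{\Rdst}|g(t-x)|^2\japarg{x}^{2s}\,dx\bigr)\,dt$, and moderateness gives the two-sided pointwise bound $c_g\,\japarg{t}^{2s}\leq\int|g(t-x)|^2\japarg{x}^{2s}\,dx\leq C_g\,\japarg{t}^{2s}$ with $c_g=\int|g(y)|^2\japarg{y}^{-2|s|}\,dy>0$, valid for every $s\in\Rst$; no STFT inversion is needed. Second, the genuine constraint sits exactly where you place it, in the two-sided equivalence $v_s(x,\xi)^2\asymp\japarg{x}^{2s}+\japarg{\xi}^{2s}$, which holds only for $s\geq0$. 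For $s<0$ one retains only the embedding $\Ltmisp\cap\FT\Ltmtsp\hookrightarrow\QsRd$, and the identification fails: e.g.\ $\delta_0\in\Qsp_{-1}(\Rst)$ (since $|V_g\delta_0(x,\xi)|=|g(-x)|$ is square-integrable against $v_{-1}^2$) while $\delta_0\notin\Lsp^2_{\japarg{x}^{-1}}(\Rst)$; indeed by duality $\QsRd=\Lsp^2_{\japarg{x}^{s}}+H^s$ for $s<0$, a sum rather than an intersection. So your restriction of the second assertion to $s\geq0$ is not a weakness of your argument but a needed correction of the proposition's literal ``$s\in\Rst$'', consistent with the qualification ``(only) for $s\geq0$'' in the abstract and with Lemma~\ref{KatsinLt}; the first assertion (Fourier invariance of the intersection) does hold for all $s\in\Rst$, as your symmetry argument shows.
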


\begin{remark} \label{ShubHerm1}
For $d = 1$ the spaces $\Qssp(\Rst)$ coincides with
 a space of tempered distributions having Hermite coefficients
in a weighted $\ltsp$-space (with polynomial weight of the order $s/2$).
\end{remark}

In order to show that the Shubin classes $\QsRdN$ (for $s \geq 0$) are
covered even in the setting of Katsnelson's paper we have to shortly
recall the concept of {\it modulation spaces} (see \cite{fe03-1,fe06}).

A meanwhile widely used  variant of modulation spaces (skipping
many technical details) are the space $\nspb \MpqmRd$, which are those
tempered distributions which have an STFT (Short-Time Fourier Transform)
belonging to a (moderately) weighted mixed-norm space (with two
independent parameters $p,q$). This STFT of $\sigma \in \ScPRd$
can be defined for any (say real-valued) Schwartz window $g \in \ScRd$ by:
\begin{equation} \label{Vgsigma1}
 V_g(\sigma)(x,y)  =  \sigma (M_y T_x g), \quad (x,y) \in \TFd.
 \end{equation}

The choice $m(x,y) = \japarg{y}^s$  then gives the classical modulation
space $\nspb \MspqRd$. For $p=q$ and radial symmetric weights of the
form $m(x,y) = v_s(z) = (1+x^2 + y^2)^{s/2}$ one has the (Fourier
invariant) modulation spaces $\MpvsRd$ (see \citeX{gr01}). For more
information on modulation spaces see \citeX{beok20}, \citeX{fe03-1},
and \citeX{fe06}.

In order to verify that the Shubin classes (see also \citeX{lura11})
 are special cases of Katsnelson's paper we need the following simple observations:
\begin{lemma} \label{KatsShub1}
%  \noindent
% \begin{enumerate}
%   \item

\noindent (i)
Given two modulation spaces $\Msp^{p,q}_{m_1}$  and $\Msp^{p,q}_{m_2}$,
     one has $\Msp^{p,q}_{m_1} \cap \Msp^{p,q}_{m_2} = \Msp^{p,q}_{m}$, with      $m = \max(m_1,m_2)$, and equivalence of the corresponding natural norms.

\noindent (ii)
For polynomial weights of the form $m_1(x,y) = \japx^s$
   and $m_2(x,y) = \langle y \rangle^s $ for some $s \in \Rst$,
   one has:  \newline  $\max(m_1,m_2)(x,y)  \sim v_s(z) = v_s(x,y)$.
% \end{enumerate}
\end{lemma}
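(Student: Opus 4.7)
The two statements are elementary consequences of pointwise weight comparisons; the plan is to reduce both to routine inequalities for the weights themselves and lift them through the STFT definition of the modulation space norms.

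For part (i), I would start by recalling that, up to the fixed choice of a nonzero window $g \in \ScRd$, the norm on $\Msp^{p,q}_m$ is simply $\|\sigma\|_{\Msp^{p,q}_m} = \|V_g \sigma\|_{\Lsp^{p,q}_m}$. Hence the statement reduces to the equivalence of weighted mixed-norm Lebesgue spaces $\Lsp^{p,q}_{m_1} \cap \Lsp^{p,q}_{m_2} = \Lsp^{p,q}_m$ with equivalent norms, where $m = \max(m_1,m_2)$. The key observation is the pointwise sandwiching
\begin{equation*}
 \max(m_1,m_2) \leq m_1 + m_2 \leq 2 \max(m_1,m_2).
\end{equation*}
The inclusion $\Lsp^{p,q}_m \hookrightarrow \Lsp^{p,q}_{m_i}$ follows immediately from $m_i \leq m$; conversely, starting from $|F|^p m^p \leq |F|^p m_1^p + |F|^p m_2^p$, an application of $(a+b)^{1/p} \leq a^{1/p}+b^{1/p}$ in the inner $x$-integral followed by the triangle inequality in the outer $\Lsp^q$-norm in $y$ yields $\|F\|_{\Lsp^{p,q}_m} \leq \|F\|_{\Lsp^{p,q}_{m_1}} + \|F\|_{\Lsp^{p,q}_{m_2}}$. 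This gives the norm equivalence, and transporting it back through $V_g$ proves (i).

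For part (ii), I would simply compute
\begin{equation*}
 \max\bigl(\japx^2, \langle y \rangle^2\bigr) = 1 + \max(|x|^2,|y|^2), \qquad \japarg{(x,y)}^2 = 1 + |x|^2 + |y|^2,
\end{equation*}
and use the trivial chain $\max(|x|^2,|y|^2) \leq |x|^2 + |y|^2 \leq 2\max(|x|^2,|y|^2)$, which shows $\max(\japx, \langle y \rangle) \sim \japarg{(x,y)}$ with absolute constants. For $s \geq 0$, raising to the $s$-th power preserves equivalence of positive quantities and gives $\max(\japx^s,\langle y \rangle^s) = \max(\japx,\langle y \rangle)^s \sim \japarg{(x,y)}^s = v_s(z)$, as claimed.

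I do not anticipate any real obstacle: both parts are pointwise arguments, and the only subtle point worth flagging is that the identity $\max(a^s, b^s) = \max(a,b)^s$ (with $a,b \geq 1$) that underlies step (ii) uses the monotonicity of $t \mapsto t^s$, which is why the equivalence with $v_s$ is the natural one precisely in the range $s \geq 0$ that is of interest for the subsequent identification of the Shubin classes $\QsRd$ as Katsnelson-type intersections.
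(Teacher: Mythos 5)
Your argument for part (ii) is essentially the paper's own: the authors prove exactly the chain $\max(|x|,|y|) \leq |z| \leq |x|+|y| \leq 2\max(|x|,|y|)$ and conclude $\max(\japarg{x}^s,\japarg{y}^s) \sim \japarg{z}^s$, so there is no real difference there. Where you add value is twofold. First, the paper gives no proof at all of part (i); your reduction to mixed-norm spaces via $\|\sigma\|_{\Msp^{p,q}_m} = \|V_g\sigma\|_{\Lsp^{p,q}_m}$ and the pointwise sandwich $\max(m_1,m_2) \leq m_1+m_2 \leq 2\max(m_1,m_2)$ is correct and fills that gap (in fact you could shortcut the $p$-th power manipulation by using solidity of $\Lsp^{p,q}$ together with the triangle inequality applied to $|F|m \leq |F|m_1 + |F|m_2$ directly; one should also note in passing that $\max(m_1,m_2)$ is again moderate, so $\Msp^{p,q}_m$ is well defined). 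Second, your caveat about $s \geq 0$ is not merely cosmetic: the lemma is stated ``for some $s \in \Rst$,'' but for $s<0$ one has $\max(\japx^s,\japy^s) = \min(\japx,\japy)^s$, which is \emph{not} equivalent to $v_s(z)$ (take $y=0$ and $|x|\to\infty$), so the identity $\max(a^s,b^s)=\max(a,b)^s$ genuinely requires $s\geq 0$ and the statement should be read with that restriction -- consistent with the paper's own remark that the identification with Katsnelson's setting works only for $s \geq 0$. The paper's proof silently assumes this monotonicity; you make it explicit.
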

\begin{proof}
For the second statement one just has to observe that for $z = (x,y)$ one
has $\max( \japarg{x}^s, \japarg{y}^s) \sim \japarg{z}^s$, using the fact
that $\max{(|x|,|y|)} \geq |z|$, since obviously
$$
\max(|x|,|y|) \leq  |z| \leq |x| + |y| \leq 2 \max(|x|,|y|), \quad z = (x,y).
$$
\end{proof}

\begin{remark}
As a final remark let us observe that $\QsRd \subset \LtRd$ if and only
if $s \geq 0$. Thus the results of \citeX{ka19-1} apply only for these
spaces, while our general results allow the full range $s \in \Rst$, including their dual spaces, since $\QsRd^* = \Qsp_{-s}(\Rdst)$.
\end{remark}

\section{Summary}

In conclusion this paper provides extensions of
 the main result of \cite{ka19-1} in the following directions:
\begin{enumerate}
  \item The Gauss function can be replaced by
    any Schwartz function with non-zero integral;

  \item The results are valid for $\Rdst$, for any $d \geq 1$;

  \item We abolish the assumption that   $\BspN$
    is a Hilbert space, as well as the rather restrictive property that it should be contained in $\LtRdN$;

  \item Our main result applies to an abundance of function spaces for which
    such completeness statements can be shown; we just list particular examples;

  \item As a benefit we establish a connection to the so-called
    Shubin classes and show that the completeness statement is
    also valid for their dual spaces.
 \end{enumerate}

In order to avoid the use of ultra-distributions and a generalized
Fourier transform in the sense of such ultra-distributions we decided
to work with the well-known setting of {\it tempered distributions} in the
sense of Schwartz, i.e.\ with $\ScPRd$, the dual of the Schwartz
space $\ScRd$ of rapidly decreasing functions. Extension to the
setting of ultra-distributions are no problem but will be discussed
elsewhere.  Also the setting of \citeX{fe77-2} (Theorem 2.2) should be helpful in this respect.

\section{Acknowledgement}

This paper was prepared during the visit of the
 second author to the NuHAG workgroup at the University of Vienna,
supported by an Ernst Mach Grant-Worldwide Fellowship (ICM-2019-13302)
 from the OeAD-GmbH, Austria. The second author is very grateful to
Professor Hans G. Feichtinger for his guidance, for hosting and
 arranging excellent research facilities at the University of Vienna.
%  The second author is also thankful to the OeAD-GmbH for the fellowship and
% to the NuHAG, University of Vienna for the kind hospitality and
%  research facilities during the period of stay.
The second author is also grateful to the NBHM-DAE (0204/19/2019R\&D-II/10472) and Indian  Institute of Science,
Bangalore for allowing the academic leave.

% This paper was prepared during the visit of the second author to
% the NuHAG workgroup at the university of Vienna, supported by an
% Ernst Mach fellowship.
%%%%%%%%%%% END DOCUMENT %%%%%%%%%%%%%%%%%%%%%%%%%%%%%%%%%%%%%%%
\bibliographystyle{abbrv}  %%%%% NuHAG-BibTeX %%%%%%%
% \bibliography{D:/nuhagall/nuhagbib/nhgbib}

\begin{thebibliography}{10}

\bibitem{beok20}
A.~{B}eyni and K.~A. {O}koudjou.
\newblock {\em {M}odulation {S}paces}.
\newblock {S}pringer ({B}irkh{\"a}user), {N}ew {Y}ork, 2020.

\bibitem{bo04-5}
N.~{B}ourbaki.
\newblock {\em {I}ntegration. {I}{I}. {C}hapters 7--9}.
\newblock {E}lements of {M}athematics ({B}erlin). {S}pringer-{V}erlag,
  {B}erlin, 2004.

\bibitem{brfe83}
W.~{B}raun and H.~G. {F}eichtinger.
\newblock {B}anach spaces of distributions having two module structures.
\newblock {\em J. Funct. Anal.}, 51:174--212, 1983.

\bibitem{dipivi15-1}
P.~{D}imovski, S.~{P}ilipovic, and J.~{V}indas.
\newblock {N}ew distribution spaces associated to translation-invariant
  {B}anach spaces.
\newblock {\em Monatsh. Math.}, 177(4):495--515, 2015.

\bibitem{dipiprvi19}
P.~{D}imovski, S.~{P}ilipovic, B.~{P}rangoski, and J.~{V}indas.
\newblock {T}ranslation--modulation invariant {B}anach spaces of
ultradistributions.
\newblock {\em J. Fourier Anal. Appl.}, 25(3):819--841, 2019.

\bibitem{du74}
D.~H. {D}unford.
\newblock {S}egal algebras and left normed ideals.
\newblock {\em J. Lond. Math. Soc. (2)}, 8:514--516, 1974.

\bibitem{ed59}
R.~E. {E}dwards.
\newblock {T}he stability of weighted {L}ebesgue spaces.
\newblock {\em Trans. Amer. Math. Soc.}, 93:369--394, 1959.

\bibitem{fe77-2}
H.~G. {F}eichtinger.
\newblock {M}ultipliers from ${L}^1({G})$ to a homogeneous {B}anach space.
\newblock {\em J. Math. Anal. Appl.}, 61:341--356, 1977.

\bibitem{fe77}
H.~G. {F}eichtinger.
\newblock {R}esults on {B}anach ideals and spaces of multipliers.
\newblock {\em Math. Scand.}, 41(2):315--324, 1977.

\bibitem{fe79}
H.~G. {F}eichtinger.
\newblock {G}ewichtsfunktionen auf lokalkompakten {G}ruppen.
\newblock {\em {S}itzber. d. {\"o}sterr. {A}kad. {W}iss.}, 188:451--471, 1979.

\bibitem{fe82-1}
H.~G. {F}eichtinger.
\newblock {A} compactness criterion for translation invariant {B}anach spaces
  of functions.
\newblock {\em {A}nalysis {M}athematica}, 8:165--172, 1982.

\bibitem{fe83}
H.~G. {F}eichtinger.
\newblock {B}anach convolution algebras of {W}iener type.
\newblock In {\em {P}roc. {C}onf. on {F}unctions, {S}eries, {O}perators,
  {B}udapest 1980}, volume~35 of {\em {C}olloq. {M}ath. {S}oc. {J}anos
  {B}olyai}, pages 509--524. {N}orth-{H}olland, {A}msterdam, {E}ds. {B}.
  {S}z.-{N}agy and {J}. {S}zabados. edition, 1983.

\bibitem{fe84}
H.~G. {F}eichtinger.
\newblock {C}ompactness in translation invariant {B}anach spaces of
  distributions and compact multipliers.
\newblock {\em J. Math. Anal. Appl.}, 102:289--327, 1984.

\bibitem{fe87-1}
H.~G. {F}eichtinger.
\newblock {M}inimal {B}anach spaces and atomic representations.
\newblock {\em Publ. Math. Debrecen}, 34(3-4):231--240, 1987.

\bibitem{fe90}
H.~G. {F}eichtinger.
\newblock {G}eneralized amalgams, with applications to {F}ourier transform.
\newblock {\em Canad. J. Math.}, 42(3):395--409, 1990.

\bibitem{fe03-1}
H.~G. {F}eichtinger.
\newblock {M}odulation spaces on locally compact {A}belian groups.
\newblock In R.~{R}adha, M.~{K}rishna, and S.~{T}hangavelu, editors, {\em
  {P}roc. {I}nternat. {C}onf. on {W}avelets and {A}pplications}, pages 1--56,
  {C}hennai, {J}anuary 2002, 2003. {N}ew {D}elhi {A}llied {P}ublishers.

\bibitem{fe06}
H.~G. {F}eichtinger.
\newblock {M}odulation {S}paces: {L}ooking {B}ack and {A}head.
\newblock {\em Sampl. Theory Signal Image Process.}, 5(2):109--140, 2006.

 \bibitem{fe16}
H.~G. {F}eichtinger.
\newblock {A} novel mathematical approach to the theory of translation
invariant linear systems.
\newblock In {P}eter {J}.~{B}entley and I.~{P}esenson, editors, {\em {N}ovel
{M}ethods in {H}armonic {A}nalysis with {A}pplications to {N}umerical
{A}nalysis and {D}ata {P}rocessing}, pages 1--32. 2016.


\bibitem{fegu90}
H.~G. {F}eichtinger and A.~T. {G}{\"u}rkanli.
\newblock {O}n a family of weighted convolution algebras.
\newblock {\em Int. J. Math. Math. Sci.}, 13(3):517--526, 1990.

\bibitem{ga69}
G.~I. {G}audry.
\newblock {M}ultipliers of weighted {L}ebesgue and measure spaces.
\newblock {\em {P}roc. {L}ond. {M}ath. {S}oc., {I}{I}{I}. {S}er.}, 19:327--340,
  1969.

\bibitem{gr01}
K.~{G}r{\"o}chenig.
\newblock {\em {F}oundations of {T}ime-{F}requency {A}nalysis}.
\newblock {A}ppl. {N}umer. {H}armon. {A}nal. {B}irkh{\"a}user, {B}oston,
  {M}{A}, 2001.

\bibitem{gr07}
K.~{G}r{\"o}chenig.
\newblock {W}eight functions in time-frequency analysis.
\newblock In L.~{R}odino and et~al., editors, {\em {P}seudodifferential
  {O}perators: {P}artial {D}ifferential {E}quations and {T}ime-{F}requency
  {A}nalysis}, volume~52 of {\em {F}ields {I}nst. {C}ommun.}, pages 343--366.
  {A}mer. {M}ath. {S}oc., {P}rovidence, {R}{I}, 2007.

\bibitem{hero70}
E.~{H}ewitt and K.~A. {R}oss.
\newblock {\em {A}bstract {H}armonic {A}nalysis. {V}ol. {I}{I}: {S}tructure and
  {A}nalysis for {C}ompact {G}roups. {A}nalysis on {L}ocally {C}ompact
  {A}belian {G}roups}.
\newblock {S}pringer, {B}erlin, {H}eidelberg, {N}ew {Y}ork, 1970.

\bibitem{ka19-1}
V.~{K}atsnelson.
\newblock {O}n the completeness of {G}aussians in a {H}ilbert functional space.
\newblock {\em Complex Anal. Oper. Theory}, 13(3):637--658, 2019.

\bibitem{ka76}
Y.~{K}atznelson.
\newblock {\em {A}n {I}ntroduction to {H}armonic {A}nalysis. 2nd corr. ed.}
\newblock {D}over {P}ublications {I}nc., {N}ew {Y}ork, 1976.

\bibitem{lura11}
F.~{L}uef and Z.~{R}ahbani.
\newblock {O}n pseudodifferential operators with symbols in generalized
  {S}hubin classes and an application to {L}andau-{W}eyl operators.
\newblock {\em Banach J. Math. Anal.}, 5(2):59--72, 2011.

\bibitem{re68}
H.~{R}eiter.
\newblock {\em {C}lassical {H}armonic {A}nalysis and {L}ocally {C}ompact
  {G}roups}.
\newblock {C}larendon {P}ress, {O}xford, 1968.

\bibitem{rest00}
H.~{R}eiter and J.~D. {S}tegeman.
\newblock {\em {C}lassical {H}armonic {A}nalysis and {L}ocally {C}ompact
  {G}roups. 2nd ed.}
\newblock {C}larendon {P}ress, {O}xford, 2000.

\end{thebibliography}

\end{document}